\newtheorem{theorem}{Theorem}[section]
\newtheorem{remark}[theorem]{Remark}
\newtheorem{lemma}[theorem]{Lemma}
\newtheorem{proposition}[theorem]{Proposition}
\newtheorem{corollary}[theorem]{Corollary}
\DeclareMathOperator*{\I}{Int}
\newcommand{\LL}{ \mathcal{L}}
\title[Optimal Installation of Solar Panels with Price Impact]{Optimal Installation of Solar Panels with Price Impact: a Solvable Singular Stochastic Control Problem}
\author[Koch, Vargiolu]{Torben Koch*, Tiziano Vargiolu}\thanks{*Corresponding author.}
\keywords{}
\address{T.~Koch: Center for Mathematical Economics (IMW), Bielefeld University, Universit\"atsstrasse 25, 33615, Bielefeld, Germany}
\email{\href{mailto:t.koch@uni-bielefeld.de}{t.koch@uni-bielefeld.de}}
\address{T.~Vargiolu: Department of Mathematics, University of Padova, Via Trieste 63, 35131 Padova, Italy}
\email{\href{mailto:vargiolu@math.unipd.it}{vargiolu@math.unipd.it}}
\date{\today}
\numberwithin{equation}{section}
\begin{document}

\begin{abstract} 
We consider a price-maker company which generates electricity and sells it in the spot market. The company can increase its level of installed power by irreversible installations of solar panels. In absence of the company's economic activities, the spot electricity price evolves as an {Ornstein-Uhlenbeck process}, and therefore it has a mean-reverting behavior. The current level of the company's installed power has a permanent impact on the electricity price and affects its mean-reversion level. The company aims at maximizing the total expected profits from selling electricity in the market, net of the total expected proportional costs of installation. This problem is modeled as a \emph{two-dimensional degenerate singular stochastic control problem} in which the installation strategy is identified as the company's control variable. We follow a \emph{guess-and-verify approach} to solve the problem. We find that the optimal installation strategy is triggered by a curve which separates the \emph{waiting region}, where it is not optimal to install additional panels, and the \emph{installation region,} where it is. Such a curve depends on the current level of the company's installed power, and is the unique strictly increasing function which solves a first-order {ordinary differential equation} (ODE). Finally, our study is complemented by a numerical analysis of the dependency of the optimal installation strategy on the model's parameters.
\end{abstract}

\maketitle

\smallskip

{\textbf{Keywords}}: singular stochastic control; irreversible investment; variational inequality; ornstein-uhlenbeck process; market impact.

\smallskip

{\textbf{MSC2010 subject classification}}: 93E20; 49L20; 91B70; 60G99.

\smallskip





\section{Introduction}
\label{introduction}

This paper proposes a model in which a company can increase its current electricity production by irreversible investments in solar panels, while maximizing net profits. Irreversible investment problems have been widely studied in the context of real options and optimal capacity expansion. Related models in the economics literature are, for example, \cite{Bertola} and the monography \cite{Dixit}. Other relevant papers appearing in the mathematical literature are \cite{aidetal2015,Ferrari,Chiarolla,Federico,Ferrari3,Ferrari2,Lokka,Oksendal,Riedel,Steg}, among many others.

We consider an infinitely-lived profit maximizing company which is a large player in the market. {The company can install solar panels in order to increase its production level of electricity up to a given maximum level. The electricity generated will immediately be sold in the market, and while installing additional panels, the company incurs constant proportional costs.} 
As it is assumed that the company is a large market player, its activities have an impact on the electricity price. In particular, we assume that the long-term electricity price level is negatively affected by the current level of installed power; that is, the electricity price will tend to move towards a lower price level if the electricity production is increased. Therefore, the company has to install solar panels carefully in order to avoid permanently low electricity prices which clearly decrease the marginal profits from selling electricity in the market.

The mathematical formulation of the model leads to a \textit{two-dimensional degenerate singular stochastic control problem} (see, for example, \cite{K85,karatzas2,Karatzasetal00} as early contributions) whose components are the electricity price and the current level of installed power which is purely controlled. To the best of our knowledge, this paper is the first which provides the complete explicit solution to a two-dimensional degenerate singular stochastic control problem in which the drift of one component of the state process (the electricity price) is linearly affected by the monotone process giving the cumulative amount of control (the level of installed power). In our model the electricity price evolves as an Ornstein-Uhlenbeck process, and dealing with such a process makes the problem more difficult in comparison to, for example, a geometric Brownian motion setting, due to the unhandy and non-explicit expressions of the fundamental solutions of the second-order ordinary differential equation involving the infinitesimal generator of the underlying Ornstein-Uhlenbeck process. It is worth noticing that our mathematical formulation shares similarities with the recent article \cite{Federico2019} in which a central bank can choose a control of bounded variation for managing the inflation. The methodology and results of \cite{Federico2019} are indeed different with respect to ours:  in fact, in that paper the authors provide a theoretical study of the structure and regularity of the value function using viscosity theory and free-boundary analysis, but do not construct an explicit solution, as instead we do.

Price impact models have gained the interest of many researchers in recent years. Some of these works are also formulated as a \textit{singular stochastic control} problem and study questions of optimal execution: \cite{Becherer2} and \cite{Becherer} take into account a multiplicative and transient price impact, whereas \cite{guo} considers an exponential parametrization in  a geometric Brownian motion setting allowing for a permanent price impact. Also, a price impact model working with \textit{singular stochastic controls} has been studied by \cite{Zervos}, motivated by an irreversible capital accumulation problem with permanent price impact, and by \cite{Koch2}, in which the authors consider an extraction problem with Ornstein-Uhlenbeck dynamics and transient price impact. 
In all of the aforementioned papers on price impact models dealing with \textit{singular stochastic controls} \cite{Zervos,Becherer2,Becherer,Koch2,guo}, the agents' actions can lead to an immediate jump in the underlying price process, whereas in our setting, it cannot. 
Finally, \cite{CFSV,CarJai} show how to incorporate a market impact due to cross-border trading in electricity markets, and \cite{RVG} models the price impact of wind electricity production on power prices. 

In our model the firm's installation strategy is represented by an increasing control, possibly non-absolutely continuous, and we take into account a running payoff function which depends linearly on the level of installed power and on the electricity price. Following an educated guess for a classical solution to the associated Hamilton-Jacobi-Bellman (HJB) equation, and imposing $C^{2,1}-$regularity of the value function, we show that the optimal installation rule is triggered by a threshold which is a function of the current level of installed power, and we provide a closed-form expression of the value function. The threshold, also called free boundary, uniquely solves an{ ordinary differential equation} (ODE) for which we implement a numerical solution. Then, we characterize the geometry of the \emph{waiting} and \emph{installation} regions. We show that the optimal installation strategy is such that the company keeps the state process inside the \emph{waiting} region. In particular, the state process is pushed towards the free boundary by installing a block of solar panels immediately, if the initial electricity price is above the critical threshold (if the maximum level of installed power, that the company is able to reach, is not sufficiently high, the company will immediately install the maximum number of panels). Thereafter, the joint process will be reflected along the free boundary. The construction of the reflected diffusion relies on ideas in \cite{Chiarolla2} that are based on the transformation of probability measures in the spirit of Girsanov. The uniqueness of the optimal diffusion process then follows by the global Lipschitz continuity of our free boundary. Our results are finally complemented by a numerical discussion of the dependency on the model parameters. We find, for example, that a higher mean-reversion level of the fundamental price process leads to a quicker installation of solar panels.

From the modeling point of view, it is common in the literature to represent electricity prices via a mean-reverting behavior, and to include (jump) terms to incorporate seasonal fluctuations and daily spikes, cf. \cite{Borovkova,CarFig,German,Weron} among others. Here, we do not represent the spikes and seasonal fluctuations, with the following justification: the installation time of solar panels usually takes several days or weeks, which makes the company indifferent to daily or weekly spikes. Also, the high lifespan of solar panels and the underlying infinite time horizon setting allow us to neglect the seasonal patterns. We therefore assume that the fundamental electricity price has solely a mean-reverting behavior, and evolves according to an Ornstein-Uhlenbeck process\footnote{We allow for negative prices by modeling the electricity price via an Ornstein-Uhlenbeck process. Indeed, negative electricity prices can be observed in some markets, for example in Germany, cf. \cite{NYTimes}.}. We are also neglecting the stochastic and seasonal effects of solar production. In fact, solar panels obviously do not produce power during the night, produce less in winter than in summer (these two effects could be covered via a deterministic seasonal component), and also produce less when it is cloudy (this should be modeled with a stochastic process). Since here we are interested in a long-term optimal behaviour, we interpret the average electricity produced in a generic unit of time as proportional to the installed power. All of this can be mathematically justified if we interpret our fundamental price to be, for example, a weekly average price as e.g. in \cite{BPPB,GPP}, who used this representation exactly to get rid of daily and weekly seasonalities. 

The rest of the paper is organized as follows. In Section \ref{sec:setting} we introduce the setting and formulate the problem. In Section \ref{sec:PreResVerTheorem} we provide preliminary results and a Verification Theorem. Then, in Section \ref{sec:Sol} we derive a characterization  of the free boundary via an ODE, and the explicit solution is constructed. Finally, Section \ref{sec:NumImp} provides a numerical implementation, and studies the dependency of the free boundary with respect to the model parameters.


\section{Model and Problem Formulation}
\label{sec:setting}
Let $(\Omega, \mathcal{F}, \mathbb{F}:=(\mathcal{F}_t)_{t\geq 0}, \mathbb{P})$ be a filtered probability space with a filtration $\mathbb{F}$ satisfying the usual conditions, and carrying a standard one-dimensional $\mathbb{F}$-Brownian motion $W$.

We consider an  infinitely-lived company which installs solar panels and sells the electricity produced by those panels instantaneously in the spot market. In absence of the company's economic activities, the fundamental electricity price $(X^x_t)_{t\geq 0}$ evolves stochastically according to an Ornstein-Uhlenbeck dynamics 
\begin{align}\label{Xuncontrolled}
dX^x_t = \kappa\big(\mu-X^x_t\big)dt + \sigma dW_t,\quad X^x_0 = x > 0,
\end{align} for some constants $\mu\in\mathbb{R}$ and $\kappa,\sigma> 0$.

The level of installed power can be increased at constant proportional cost $c\geq 0$ due to the installation costs of panels. It is assumed that the firm cannot reduce the number of solar panels, thus the installation is irreversible. The current level of installed power is described by the process $(Y^{y,I}_t)_{t\geq 0}$, which is given by 
\begin{align}
Y^{y,I}_t=y+I_t,
\end{align}
where the initial level of installed power is denoted by $y\geq 0$, and $I_t$ is identified as the company's control variable: it is an $\mathbb{F}$-adapted nonnegative and increasing c\`{a}dl\`{a}g process $I=(I_t)_{t\geq 0}$, where $I_t$ represents the total power installed within the interval $[0,t]$. In the following, $(I_t)_{t\geq 0}$ is also referred to as the installation strategy. Moreover, we assume that the level of installed power cannot exceed a given $\bar{y}\in[y,\infty)$ since, for example, only a finite number of solar panels can be installed. The set of admissible installation strategies is therefore defined as 
\begin{align*}
\mathcal{I}^{\bar{y}}(y):=\{I:\Omega\times[0,\infty)\mapsto[0,\infty)&\text{ : $(I_t)_{t\geq 0}$ is $\mathbb{F}$-adapted, }t\mapsto I_t \text{ is increasing, c\`{a}dl\`{a}g,}\\
&\text{ with }I_{0-}=0\leq I_t\leq\bar{y}-y\text{ a.s.}\}.
\end{align*} 
We write $\mathcal{I}^{\bar{y}}(y)$ in order to stress the dependency on both the initial level of installed power $y$ and the maximum possible level $\bar{y}$.

We assume that the current level of electricity production, which is proportional to $Y^{y,I}_t$, affects the electricity market price. In particular, when following an installation strategy $I\in\mathcal{I}^{\bar{y}}(y)$, the mean level of the market price $X$ is instantaneously reduced at time $t$ by ${\beta} Y^{y,I}_t$, for some $\beta>0$, and the spot price $X^{x,y,I}$ thus evolves as 
\begin{align}\label{affectedX}
dX^{x,y,I}_t = \kappa\bigg((\mu-\beta Y^{y,I}_t)-X^{x,y,I}_t\bigg)dt + \sigma dW_t,\quad X^{x,y,I}_{0-} = x > 0.
\end{align}
The company aims at maximizing the total expected profits from selling electricity in the market, net of the total expected costs of installation. That is, the company aims at determining
\begin{align}\label{ValueFnc} 
V(x,y):=\sup_{I\in\mathcal{I}^{\bar{y}}(y)}\mathcal{J}(x,y,I),\quad (x,y)\in\mathbb{R}\times[0,\bar{y}],
\end{align}
where for any $I\in\mathcal{I}^{\bar{y}}(y)$
\begin{align}\label{PC}
\mathcal{J}(x,y,I):=\mathbb{E}\bigg[\int_{0}^{\infty}e^{-\rho t}X_t^{x,y,I}\left(\alpha Y^{y,I}_t\right) dt-c\int_{0}^{\infty}e^{-\rho t}dI_t\bigg],\quad\alpha>0.
\end{align}
In \eqref{PC}, the parameter $\alpha$ is the proportional factor between the average electricity produced in a generic unit of time and the current level of installed power. Thus, the running gain $\alpha X_t^{x,y,I} Y_t^{y,I}$ can be viewed as a weekly-averaged revenue deriving from solar production, here represented in continuous time as the life span of a typical solar panel is of several years. 

For the sake of simplicity, we set $\alpha=1$ in the following. In fact, the problem of finding an optimal control $I\in\mathcal{I}^{\bar{y}}(y)$ in \eqref{PC} does not change for $\alpha>0$ upon introducing a new cost factor $\tilde{c}=\frac{c}{\alpha}.$

\section{A Verification Theorem}
\label{sec:PreResVerTheorem}
The aim of this section is to provide a verification theorem which characterizes the solution to our problem.

A non-installation strategy is denoted by the function $I^0\equiv0$, and we {indicate} the electricity price process {implied by} $I^0$ by $(X^{x,y}_t)_{t\geq 0}$, that is $X^{x,y}_t\equiv X^{x,y,I^0}_t$. Then, the expected profits of the firm following a non-installation strategy is described by the function $R:\mathbb{R}\times[0,\bar{y}]\mapsto\mathbb{R}$ such that
\begin{align}\label{PS}
R(x,y):=\mathcal{J}(x,y,I^0)=\mathbb{E}\bigg[\int_{0}^{\infty}e^{-\rho t}X_t^{x,y}y dt\bigg]=\frac{x{y}}{\rho+\kappa}+\frac{\mu\kappa{y}}{\rho(\rho+\kappa)}-\frac{\kappa\beta {y}^2}{\rho(\rho+\kappa)},
\end{align} 
The following preliminary result provides a growth condition and a monotonicity property of the value function $V$, and its connection to the function $R$. The proof of the proposition can be found in the appendix.
\begin{proposition}\label{GrowthV}
	There exist a constant $K>0$ such that for all $(x,y)\in\mathbb{R}\times[0,\bar{y}]$ one has
	\begin{align}\label{eq13}
	|V(x,y)|\leq K\big(1+|x|\big).
	\end{align}
	Moreover, $V(x,\bar{y})=R(x,\bar{y})$, and $V$ is increasing in $x$.
\end{proposition}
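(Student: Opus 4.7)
The plan is to establish the three claims in sequence, treating the growth bound last since it is the most involved. I will repeatedly exploit the explicit mild-solution representation of the controlled Ornstein–Uhlenbeck SDE \eqref{affectedX},
\[
X^{x,y,I}_t = e^{-\kappa t}x + \kappa\int_0^t e^{-\kappa(t-s)}\bigl(\mu - \beta Y^{y,I}_s\bigr)ds + \sigma\int_0^t e^{-\kappa(t-s)}dW_s,
\]
which holds for every admissible $I\in\mathcal{I}^{\bar y}(y)$.

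For the identity $V(x,\bar y)=R(x,\bar y)$, I would observe that the admissibility constraint $I_t\le \bar y-y$ forces $\mathcal{I}^{\bar y}(\bar y)=\{I^0\}$, so the supremum in \eqref{ValueFnc} reduces to a single term, namely $\mathcal{J}(x,\bar y,I^0)=R(x,\bar y)$. This is a one-line argument once the admissibility set is inspected.

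For the monotonicity of $V$ in $x$, I would use a pathwise coupling: for fixed $y$ and $I\in\mathcal{I}^{\bar y}(y)$, running both $X^{x_1,y,I}$ and $X^{x_2,y,I}$ on the same Brownian path shows that their difference satisfies $X^{x_2,y,I}_t-X^{x_1,y,I}_t = e^{-\kappa t}(x_2-x_1)$ because the nonlinear dependence on $I$ cancels. Since $Y^{y,I}\ge 0$ and the installation-cost term is identical, this yields
\[
\mathcal{J}(x_2,y,I)-\mathcal{J}(x_1,y,I) = (x_2-x_1)\,\mathbb{E}\!\left[\int_0^\infty e^{-(\rho+\kappa)t}Y^{y,I}_t\,dt\right]\ge 0
\]
whenever $x_2\ge x_1$. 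Taking the supremum over $I$ on both sides gives the monotonicity.

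For the growth bound \eqref{eq13}, which is the main step, I would derive an upper and a lower bound separately. For the lower bound, the admissibility of $I^0$ yields $V(x,y)\ge R(x,y)$, and the explicit formula \eqref{PS} makes $|R(x,y)|\le C_1(1+|x|)$ uniform in $y\in[0,\bar y]$ immediate. For the upper bound, I would drop the nonnegative installation-cost term and use $0\le Y^{y,I}_t\le\bar y$ to get
\[
\mathcal{J}(x,y,I)\le \bar y\int_0^\infty e^{-\rho t}\,\mathbb{E}\bigl[|X^{x,y,I}_t|\bigr]\,dt.
\]
Plugging in the mild-solution formula, the triangle inequality and $Y\le\bar y$ bound the deterministic part by $|x|+\kappa^{-1}(|\mu|+\beta\bar y)$, while Itô's isometry bounds the stochastic convolution in $L^1$ by $\sigma/\sqrt{2\kappa}$. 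Hence $\sup_{t\ge 0}\mathbb{E}[|X^{x,y,I}_t|]\le C_2(1+|x|)$ uniformly in $I$, and integrating against $e^{-\rho t}$ gives an upper bound of the required form. Combining the two one-sided bounds and taking the supremum over admissible $I$ yields \eqref{eq13}. The only mildly delicate point I anticipate is keeping the constants uniform in $I\in\mathcal{I}^{\bar y}(y)$, which is precisely what the bound $Y^{y,I}_t\le\bar y$ secures.
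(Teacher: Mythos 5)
Your proposal is correct, and its overall skeleton (lower bound via the non-installation strategy $I^0$, upper bound via $0\le Y^{y,I}\le\bar y$, the singleton admissibility set at $y=\bar y$, and a pathwise comparison for monotonicity) matches the paper's. The technical execution of the upper bound, however, is genuinely different. The paper first compares with the \emph{uncontrolled} process, $X^{x,y,I}_t\le X^x_t$ $\mathbb{P}$-a.s., and then proves the stronger uniform estimate $\mathbb{E}\bigl[\sup_{t\ge 0}e^{-\tilde\rho t}|X^x_t|\bigr]\le C(1+|x|)$ via It\^o's formula and the Burkholder--Davis--Gundy inequality; that sup-estimate (equation \eqref{supfinite}) is then reused in the proof of the Verification Theorem to justify dominated convergence. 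You instead bound $\mathbb{E}[|X^{x,y,I}_t|]$ pointwise in $t$, uniformly in $I$, directly from the mild-solution formula and the It\^o isometry, and integrate against $e^{-\rho t}$ by Tonelli. Your route is more elementary and avoids BDG entirely, but it yields only the weaker first-moment bound; the paper's extra effort is not wasted because the sup-estimate is needed later anyway. Your monotonicity argument is also slightly sharper than the paper's: the exact coupling identity $X^{x_2,y,I}_t-X^{x_1,y,I}_t=e^{-\kappa t}(x_2-x_1)$ gives the explicit nonnegative increment of $\mathcal{J}$, whereas the paper only invokes the comparison inequality. One immaterial slip: the deterministic drift contribution is bounded by $\kappa(|\mu|+\beta\bar y)\int_0^t e^{-\kappa(t-s)}ds=(|\mu|+\beta\bar y)(1-e^{-\kappa t})\le |\mu|+\beta\bar y$, not $\kappa^{-1}(|\mu|+\beta\bar y)$; since only the form $C(1+|x|)$ matters, this does not affect the conclusion.
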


In a next step we derive the Hamilton-Jacobi-Bellman (HJB), a particular partial differential equation which characterizes the solution to our problem. 

For given and fixed $y\geq 0$, let $\mathcal{L}^y$ be the infinitesimal generator of the diffusion $X^{x,y}$ given by the second order differential operator
\begin{align}\label{InfinitesOp}
\LL^y u(x,y):=\frac{1}{2}\sigma^2\frac{\partial^2}{\partial x^2}u(x,y) + \kappa\Big((\mu-{\beta}y)-x\Big)\frac{\partial}{\partial x}u(x,y),
\end{align}
where $u(\cdot,y)\in C^{2}(\mathbb{R})$.

The HJB equation, for singular control problems as this one, follows this heuristic argument. At time zero, the firm has two possible options: either it waits for a short time period $\Delta t$, in which the firm does not install additional panels and gains running profits from selling $y$ units of electricity in the market, or it can install solar panels immediately in order to increase its level of installed power. After each of these actions the firm behaves optimally. Suppose that the firm follows the first action. Since this action is not necessarily optimal,
it is associated to the inequality
\begin{align}\label{eq9}
V(x,y)\geq\mathbb{E}\bigg[\int_{0}^{\Delta t}e^{-\rho s}{X_s^{x,y}}yds+e^{-\rho \Delta t}V(X^{x,y}_{\Delta t},y)\bigg],\quad(x,y)\in\mathbb{R}\times[0,\bar{y}).
\end{align}
Employing It\^o's formula to the last term of the right-hand side of \eqref{eq9}, dividing by $\Delta t$, and then letting $\Delta t\rightarrow 0$, we obtain
$$\mathcal{L}^yV(x,y)-\rho V(x,y)+xy\leq 0,\quad(x,y)\in\mathbb{R}\times[0,\bar{y}).$$
Now, suppose the firm follows the second option, i.e. to increase its level of installed power by $\varepsilon>0$ units and then to continue optimally. This action is associated to 
$$V(x,y)\geq V(x,y+\varepsilon)-c\varepsilon,$$
which in turn, by dividing by $\varepsilon$ and letting  $\varepsilon\downarrow 0$, implies 
$$V_y(x,y)-c\leq 0.$$

The previous observations suggest that $V$ should identify with an appropriate solution $w$ to the HJB equation
\begin{align}\label{HJB1}
\max\Big\{\mathcal{L}^yw(x,y)-\rho w(x,y)+xy,w_y(x,y)-c\Big\}=0,\quad(x,y)\in\mathbb{R}\times[0,\bar{y}),
\end{align} with boundary condition $$w(x,\bar{y})=R(x,\bar{y}).$$

With reference to \eqref{HJB1}, we introduce the \emph{waiting region} 
\begin{align}\label{WROld}
\mathbb{W}:=\{(x,y)\in\mathbb{R}\times[0,\bar{y}):\mathcal{L}^yw(x,y)-\rho w(x,y)+xy=0,\,w_y(x,y)-c<0\},
\end{align}
where we expect not to be optimal to install additional solar panels, and the \emph{installation region} 
\begin{align}\label{IROld}
\mathbb{I}:=\{(x,y)\in\mathbb{R}\times[0,\bar{y}):\mathcal{L}^yw(x,y)-\rho w(x,y)+xy\leq0,\,w_y(x,y)-c=0\},
\end{align} 
where we expect it to be.

We move on by proving a Verification Theorem. It shows that an appropriate solution to the HJB equation \eqref{HJB1} identifies with the value function, if an admissible installation strategy exists which keeps the state process $(X,Y)$ inside the waiting region $\overline{\mathbb{W}}$ with minimal effort, i.e. by increasing the level of installed power whenever $(X,Y)$ enters the installation region $\mathbb{I}$. Here, we have denoted by $\overline{\mathbb{W}}$ the closure of $\mathbb{W}$.

\begin{theorem}[Verification Theorem]\label{VerificationTheorem}
	Suppose there exists a function $w:\mathbb{R}\times[0,\bar{y}]\mapsto\mathbb{R}$ such that $w\in C^{2,1}(\mathbb{R}\times[0,\bar{y}])$ solves the HJB equation \eqref{HJB1} with boundary condition $w(x,\bar{y})=R(x,\bar{y})$, and satisfies the growth condition 
	\begin{align}\label{eq15}
	|w(x,y)|\leq K\big(1+|x|\big),
	\end{align} for a constant $K>0$. Then $w\geq v$ on $\mathbb{R}\times[0,\bar{y}]$.\\ 
	Moreover, suppose that for all initial values $(x,y)\in\mathbb{R}\times[0,\bar{y})$, there exists a process $I^\star\in\mathcal{I}^{\bar{y}}(y)$ such that
	\begin{align}\label{eq2VT}
	(X_t^{x,y,I^\star},Y^{y,I^\star}_t)\in{\overline{\mathbb{W}}},\quad\text{for all }t\geq 0,\, \text{$\mathbb{P}$-a.s.},\\\label{eq3VT}
	I_t^\star=\int_{0-}^t\mathds{1}_{\{(X_s^{x,y,I^\star},Y^{y,I^\star}_s)\in\mathbb{I}\}}dI^\star_s,\quad\text{for all }t\geq 0,\, \text{$\mathbb{P}$-a.s.}
	\end{align}
	Then we have
	$$V(x,y)=w(x,y),\quad (x,y)\in\mathbb{R}\times[0,\bar{y}],$$
	and $I^\star$ is optimal; that is, $V(x,y)=\mathcal{J}(x,y,I^\star)$.
\end{theorem}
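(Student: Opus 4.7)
My plan is a standard verification argument via It\^o's formula for c\`adl\`ag semimartingales, combined with the HJB inequalities in \eqref{HJB1} satisfied by $w$. For the inequality $w \geq V$, fix $(x,y) \in \mathbb{R} \times [0,\bar{y}]$ and an arbitrary $I \in \mathcal{I}^{\bar{y}}(y)$, and write $(X_t, Y_t) := (X^{x,y,I}_t, Y^{y,I}_t)$. Note that $X$ has continuous paths because $Y$ enters only the drift in \eqref{affectedX}. Decomposing $I = I^c + \sum_{s \leq \cdot}\Delta I_s$ into its continuous and pure-jump parts and applying It\^o's formula (using the $C^{2,1}$ regularity of $w$) to $e^{-\rho t} w(X_t, Y_t)$ yields
\begin{align*}
e^{-\rho t} w(X_t, Y_t) &= w(x,y) + \int_0^t e^{-\rho s}\bigl(\mathcal{L}^{Y_s} w - \rho w\bigr)(X_s, Y_s)\, ds + M_t \\
&\quad + \int_0^t e^{-\rho s} w_y(X_s, Y_{s-})\, dI^c_s + \sum_{s \leq t} e^{-\rho s} \int_{Y_{s-}}^{Y_s} w_y(X_s, u)\, du,
\end{align*}
where $M_t := \sigma \int_0^t e^{-\rho s} w_x(X_s, Y_s)\, dW_s$ is a local martingale. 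The HJB inequalities $\mathcal{L}^y w - \rho w + xy \leq 0$ and $w_y \leq c$ then collapse this to
\begin{align*}
e^{-\rho t} w(X_t, Y_t) \leq w(x,y) - \int_0^t e^{-\rho s} X_s Y_s\, ds + M_t + c \int_0^t e^{-\rho s}\, dI_s.
\end{align*}

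Next, I would localize via $\tau_n := \inf\{t \geq 0 : |X_t| \geq n\}$ to make $M$ a true martingale on $[0, t \wedge \tau_n]$, take expectations, and send $n \to \infty$ followed by $t \to \infty$. The limits rest on a standard Gronwall estimate for \eqref{affectedX}: since $0 \leq Y_t \leq \bar{y}$ uniformly, one obtains a polynomial moment bound $\mathbb{E}[|X_t|^2] \leq C(1+x^2)(1+t)$, which together with \eqref{eq15} forces $e^{-\rho t}\mathbb{E}[|w(X_t, Y_t)|] \to 0$, while dominated/monotone convergence handle the two remaining integral terms. Rearranging gives $w(x,y) \geq \mathcal{J}(x,y,I)$, and taking the supremum over $I \in \mathcal{I}^{\bar{y}}(y)$ yields $w \geq V$.

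For the identification under \eqref{eq2VT}--\eqref{eq3VT}, I would repeat the computation with $I^\star$ in place of $I$. Condition \eqref{eq2VT} and continuity force $(\mathcal{L}^{Y^\star_s} w - \rho w)(X_s^\star, Y_s^\star) = -X_s^\star Y_s^\star$ for all $s \geq 0$, by the definition \eqref{WROld} of $\mathbb{W}$. Condition \eqref{eq3VT} means that $dI^\star$ charges only the installation region, where $w_y = c$; for each jump time $s$, the entire segment $\{X^\star_s\} \times [Y^\star_{s-}, Y^\star_s]$ must lie in $\mathbb{I}$ (otherwise the trajectory would instantaneously exit $\overline{\mathbb{W}}$, contradicting \eqref{eq2VT}), so $w(X^\star_s, Y^\star_s) - w(X^\star_s, Y^\star_{s-}) = c\,\Delta I^\star_s$. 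Every inequality in the previous display thus becomes an equality, and the same limiting procedure produces $w(x,y) = \mathcal{J}(x,y,I^\star)$. Combined with the first part this gives $V = w$ and the optimality of $I^\star$.

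The main technical obstacle I anticipate is the passage-to-the-limit step: showing that $M$ can be killed in expectation and that $e^{-\rho t} w(X_t, Y_t) \to 0$ in $L^1$ uniformly in the admissible control. Both rest on uniform moment bounds for the controlled OU diffusion $X^{x,y,I}$, which exploit the uniform boundedness $Y \leq \bar{y}$ and the linear growth \eqref{eq15} of $w$, together with a local bound on $w_x$ obtained from $C^{2,1}$ regularity restricted to $\{|X| \leq n\}$ that is used to kill $M$ in expectation after localization.
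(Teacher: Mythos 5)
Your proposal is correct and follows essentially the same route as the paper's proof: Itô's formula for the càdlàg semimartingale $e^{-\rho t}w(X_t,Y_t)$ with the jump increments rewritten as $\int_{Y_{s-}}^{Y_s}w_y(X_s,u)\,du$, the HJB inequalities to obtain $w\geq\mathcal{J}(x,y,I)$, localization plus moment bounds on the controlled OU process (using $Y\leq\bar y$ and the linear growth of $w$) to pass to the limit, and equality throughout for $I^\star$ satisfying \eqref{eq2VT}--\eqref{eq3VT}. The only differences are cosmetic — you localize with $t\wedge\tau_n$ and a per-time moment bound where the paper uses $\tau_R\wedge N$ and a BDG bound on $\sup_t e^{-\rho t}(X_t^x)^2$ — and your explicit justification that the whole jump segment lies in $\mathbb{I}$ is, if anything, a point the paper leaves implicit.
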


\begin{proof} {Since we have $w(x,\bar{y})=R(x,\bar{y})=V(x,\bar{y})$ by assumption, we let $y<\bar{y}$.} In a first step, we prove that $w\geq v$ on $\mathbb{R}\times[0,\bar{y})$, and then in a second step, we show that $w\leq v$ on $\mathbb{R}\times[0,\bar{y})$ and the optimality of $I^\star$ satisfying \eqref{eq2VT} and \eqref{eq3VT}.\vspace{0.25cm}
	
\emph{Step 1.} Let $(x,y)\in\mathbb{R}\times[0,\bar{y})$ be given and fixed, and $I\in\mathcal{I}^{\bar{y}}(y)$. For $N>0$ we set $\tau_{R,N}:=\tau_{R}\wedge N,$ where $\tau_{R}:=\inf\{s>0:X^{x,y,I}_s\notin (-R,R)\}$. In the following, we write $\Delta I_s:=I_s-I_{s-}$, $s\geq 0$, and $I^c$ denotes the continuous part of $I\in\mathcal{I}^{\bar{y}}(y)$. By an application of It\^o's formula, we have
	\begin{align}\label{eq14}
	\begin{split}
	&e^{-\rho \tau_{R,N}}w(X^{x,y,I}_{\tau_{R,N}},Y^{y,I}_{\tau_{R,N}})-w(x,y)\\
	=&\int_{0}^{\tau_{R,N}}e^{-\rho s}\Big(\mathcal{L}^yw(X^{x,y,I}_s,Y^{y,I}_s)-\rho w(X^{x,y,I}_s,Y^{y,I}_s)\Big)ds+\underbrace{\sigma\int_{0}^{\tau_{R,N}}e^{-\rho s}w_x(X^{x,y,I}_s,Y^{y,I}_s)dW_s}_{=:M_{\tau_{R,N}}}\\&
	+\sum_{0\leq s\leq \tau_{R,N}}e^{-\rho s}\big[w(X^{x,y,I}_s,Y^{y,I}_s)-w(X^{x,y,I}_{s},Y^{y,I}_{s-})\big]+\int_{0}^{\tau_{R,N}} e^{-\rho s} w_y(X^{x,y,I}_s,Y^{y,I}_s)dI_s^c,
	\end{split}
	\end{align}
	upon noticing that $t\mapsto X^{x,y,I}_t$ is continuous almost surely for any $I\in \mathcal{I}^{\bar{y}}(y)$. Now, we find
	\begin{align*}
	w(X^{x,y,I}_s,Y^{y,I}_s)-w(X^{x,y,I}_{s},Y^{y,I}_{s-})
	=&w(X^{x,y,I}_{s},Y^{y,I}_{s-}+\Delta I_s)-w(X^{x,y,I}_{s},Y^{y,I}_{s-})\\
	=&\int_{0}^{\Delta I_s} w_y(X^{x,y,I}_{s},Y^{y,I}_{s-}+u)du,
	\end{align*}
	which substituted back into \eqref{eq14} gives the equivalence
	\begin{align*}
	&\int_{0}^{\tau_{R,N}}e^{-\rho s}X^{x,y,I}_sY^{y,I}_sds-c\int_{0}^{\tau_{R,N}}e^{-\rho s}dI_s\\
	=& w(x,y)-e^{-\rho \tau_{R,N}}w(X^{x,y,I}_{\tau_{R,N}},Y^{y,I}_{\tau_{R,N}})\\
	&+\int_{0}^{\tau_{R,N}}e^{-\rho s}\Big(\mathcal{L}^yw(X^{x,y,I}_s,Y^{y,I}_s)-\rho w(X^{x,y,I}_s,Y^{y,I}_s)+X^{x,y,I}_sY^{y,I}_s\Big)ds+M_{\tau_{R,N}}\\&
	+\sum_{0\leq s\leq \tau_{R,N}}e^{-\rho s}\int_{0}^{\Delta I_s}\big[w_y(X^{x,y,I}_{s},Y^{y,I}_{s-}+u)-c\big]du+\int_{0}^{\tau_{R,N}}e^{-\rho s}\big[ w_y(X^{x,y,I}_{s},Y^{y,I}_{s})-c\big]dI_s^c,
	\end{align*}
	by adding $\int_{0}^{\tau_{R,N}}e^{-\rho s}X^{x,y,I}_sY^{y,I}_sds-c\int_{0}^{\tau_{R,N}}e^{-\rho s}dI_s$ on both sides of \eqref{eq14}. Since $w$ satisfies \eqref{HJB1} and \eqref{eq15}, by taking expectations on both sides of the latter equation, and using that $\mathbb{E}[M_{\tau_{R,N}}]=0$, we have
	\begin{align}\label{eq2}
	\mathbb{E}\Big[\int_{0}^{\tau_{R,N}}e^{-\rho s}X^{x,y,I}_sY^{y,I}_sds-c\int_{0}^{\tau_{R,N}}e^{-\rho s}dI_s\Big]\leq w(x,y)+K\mathbb{E}\Big[e^{-\rho \tau_{R,N}}\Big(1+|X^{x,y,I}_{\tau_{R,N}}|\Big)\Big].
	\end{align}
	In order to apply the dominated convergence theorem in \eqref{eq2}, we notice on the one hand that $X^{x,y,I}_t\leq X_t^{x}$ $\mathbb{P}$-a.s. for all $t\geq 0$, and therefore that
	\begin{align*}
	X^{x,y,I}_t&=x+\int_0^t\kappa\big((\mu-\beta Y^{y,I}_t)-X^{x,y,I}_s\big)ds +\sigma W_t\geq x+\int_0^t\kappa\big(\mu-X^{x}_s\big)ds +\sigma W_t-\kappa\beta\bar{y}t\\
	&=X^{x}_t-\kappa\beta\bar{y}t\geq -|X_t^{x}|-\kappa\beta\bar{y}t,
	\end{align*} 
	where we have used that $Y^{y,I}_t\leq \bar{y}$ $\mathbb{P}$-a.s. for all $t\geq 0$.
	Also, one clearly has $X^{x,y,I}_t\leq X_t^{x}\leq |X_t^{x}|+\kappa\beta\bar{y}t$. Hence,  
	\begin{align}\label{eq12}
	|X^{x,y,I}_t|\leq |X_t^{x}|+\kappa\beta\bar{y}t.
	\end{align} 
	
	Now, we find that $\mathbb{P}$-a.s.
	\begin{align}\label{eq19}
	\begin{split}
	\bigg|\int_{0}^{\tau_{R,N}}e^{-\rho s}X^{x,y,I}_sY^{y,I}_sds-c\int_{0}^{\tau_{R,N}}e^{-\rho s}dI_s\bigg|\leq \bar{y}\int_{0}^{\infty}e^{-\rho s}\Big(|X^{x}_s|+\kappa\beta\bar{y}s\Big)ds+c\bar{y},
	\end{split}
	\end{align}
	and the first expression on the right-hand side of \eqref{eq19} is integrable by \eqref{supfinite}. On the other hand, so to take care of the expectation on the right-hand side of \eqref{eq2}, we employ again \eqref{eq12} to get for some constant $C_1>0$
	\begin{align}
	\begin{split}\label{eq11}
	\mathbb{E}\Big[e^{-\rho \tau_{R,N}} (1+|X_{\tau_{R,N}}^{x,y,I}|)\Big]&\leq C_1 \mathbb{E}\Big[e^{-\rho \tau_{R,N}}\left(1+\tau_{R,N}\right)\Big] + \mathbb{E}\Big[e^{-\frac{\rho}{2} \tau_{R,N}}\sup\limits_{t\geq 0}e^{-\frac{\rho}{2}t}|X_t^{x}|\Big]\\
	& \leq C_1 \mathbb{E}\Big[e^{-\rho \tau_{R,N}}\left(1+\tau_{R,N}\right)\Big] + \mathbb{E}\Big[e^{-{\rho} \tau_{R,N}}\Big]^{\frac{1}{2}}\mathbb{E}\Big[\sup\limits_{t\geq 0}e^{-\rho t}(X_t^{x})^2\Big]^{\frac{1}{2}},
	\end{split}
	\end{align}
	where we have used H\"older's inequality in the last step. 
	As for the last expectation in \eqref{eq11}, observe that by It\^o's formula we find
	\begin{align}
	\begin{split}\label{eq59}
	e^{-\rho t}(X^{x}_t)^2\leq x^2&+ \int_0^te^{-\rho u}\Big[\rho(X^{x}_u)^2+\sigma^2\Big]du\\&+\int_0^t2e^{-\rho u}|X^{x}_u|(\kappa(|\mu|+|X^{x}_u|))du+2\sigma\sup\limits_{t\geq 0}\bigg|\int_0^te^{-\rho u} X^{x}_u dW_u\bigg|.
	\end{split}
	\end{align}
	Then, by an application of the Burkholder-Davis-Gundy inequality (cf. Theorem 3.28 in \cite{karatzas}), we find that
	\begin{align}\label{eq43}
	\mathbb{E}\Big[\sup_{t\geq 0}\Big|\int_0^te^{-\rho u}\sigma X^{x}_u dW_u\Big|\Big]\leq C_2(1+|x|),
	\end{align}
	for some constant $C_2>0$. Then, since standard calculations show that $\mathbb{E}\big[|X_u^{x}|^q\big]\leq \tilde{C}(1+|x|^q)$ for $q\in\{1,2\}$ and some $\tilde{C}>0$, we obtain from \eqref{eq59} and \eqref{eq43} 
	\begin{align}\label{eq18}
	\mathbb{E}\Big[\sup\limits_{t\geq 0}e^{-\rho t}(X_t^{x})^2\Big]\leq C_3(1+x^2),
	\end{align} for some constant $C_3>0$, and therefore, it follows with \eqref{eq11} 
	\begin{align}\label{eq130}
	\lim\limits_{N\uparrow\infty}\lim\limits_{R\uparrow\infty}\mathbb{E}\Big[e^{-\rho \tau_{R,N}} (1+|X_{\tau_{R,N}}^{x,y,I}|)\Big]=0.
	\end{align} Hence, we can invoke the dominated convergence theorem in order to take limits as $R\rightarrow\infty$ and then as $N\rightarrow\infty$, so to get
	\begin{align}\label{eqVT}
	\mathcal{J}(x,y,I)\leq w(x,y).
	\end{align}
	
	Since $I\in\mathcal{I}^{\bar{y}}(y)$ is arbitrary, we have
	\begin{align}\label{eq58}
	V(x,y)\leq w(x,y),
	\end{align}
	which yields $V\leq w$ by arbitrariness of $(x,y)$ in $\mathbb{R}\times[0,\bar{y})$.\vspace{0.25cm}
	
	\emph{Step 2.} Let $I^\star\in\mathcal{I}^{\bar{y}}(y)$ satisfying \eqref{eq2VT} and \eqref{eq3VT}, and $\tau^\star_{R,N}:=\inf\{t\geq 0: X^{x,y,I^\star}_{t}\notin(-R,R)\}\wedge N$. Employing the same arguments as in \emph{Step 1} all the inequalities become equalities and we obtain 
	\begin{align}\label{eq17}
	&\mathbb{E}\Big[\int_{0}^{\tau_{R,N}}e^{-\rho s}X^{x,y,I^\star}_sY^{y,I^\star}_sds-c\int_{0}^{\tau_{R,N}}e^{-\rho s}dI^\star_s\Big]+\mathbb{E}\Big[e^{-\rho \tau^\star_{R,N}}w(X^{x,y,I^\star}_{\tau^\star_{R,N}},I^\star_{\tau^\star_{R,N}})\Big]= w(x,y).
	\end{align}
	Now, because $I^\star$ is admissible and upon employing \eqref{eq15} and \eqref{eq130}, we proceed as in \emph{Step 1} , and take limits as $R\uparrow\infty$ and $N\uparrow\infty$ in \eqref{eq17}, so to find $\mathcal{J}(x,y,I^\star)\geq w(x,y)$. Since clearly $V(x,y)\geq\mathcal{J}(x,y,I^\star)$, then $V(x,y)\geq w(x,y)$ for all $(x,y)\in\mathbb{R}\times[0,\bar{y})$. Hence, using \eqref{eq58} $V=w$ on $\mathbb{R}\times [0,\bar{y})$ and $I^\star$ is optimal. 
\end{proof}

\section{Constructing an Optimal Solution to the Installation Problem}
\label{sec:Sol}
In this section, we first construct a candidate value function and a candidate optimal strategy. Then, we move on by verifying their optimality. 

We make the guess that there exists an injective function $F:[0,\bar{y}]\to\mathbb{R}$, called the \emph{free boundary} which separates the \emph{waiting region} $\mathbb{W}$ and the \emph{installation region} $\mathbb{I}$, such that
\begin{align}\label{WR}
\mathbb{W}&=\{(x,y)\in\mathbb{R}\times[0,\bar{y}):\,x< F(y)\},\\\label{IR}
\mathbb{I}&=\{(x,y)\in\mathbb{R}\times[0,\bar{y}):\,x\geq F(y)\}.
\end{align} 

For all $(x,y)\in\mathbb{W}$, the candidate value function $w$ should satisfy (cf. \eqref{WROld})
\begin{align}\label{cond1}
\mathcal{L}^yw(x,y)-\rho w(x,y)+xy=0.
\end{align}
Recall \eqref{PS}. It is straightforward to check that a particular solution to \eqref{cond1} is given by the function $R$. Moreover, the homogeneous differential equation 
\begin{align}\label{odehom}
\mathcal{L}^yw(x,y)-\rho w(x,y)=0,
\end{align} admits two fundamental strictly positive solutions (see pp. 18-19 of \cite{Borodin}). These are given by $\phi(x+\beta y)$ and $\psi(x+\beta y)$, with $\phi(\cdot)$ strictly decreasing and $\psi(\cdot)$ strictly increasing, cf. Lemma \ref{Properties}-(1),(5). 
Therefore our candidate value function $w$ takes the form
\begin{align}
w(x,y)=A(y)\psi(x+\beta y)+B(y)\phi(x+\beta y)+R(x,y),\quad(x,y)\in\mathbb{W},
\end{align} for some functions $A,B:[0,\bar{y}]\mapsto\mathbb{R}$ to be found. Notice that, for $y\geq 0$ be given and fixed, $\phi(x+\beta y)$ grows to $+\infty$ exponentially fast whenever $x\downarrow-\infty$, cf. Appendix 1 in \cite{Borodin}. In light of the linear growth of $V$, see Proposition \ref{GrowthV}, and the structure of the waiting region $\mathbb{W}$, cf. \eqref{WR}, we must then have $B(y)=0$ for all $y\in[0,\bar{y}]$. 
Thus, we conjecture that 
\begin{align}\label{Repw}
w(x,y)=A(y)\psi\left(x+{\beta}y\right)+R(x,y),\quad\text{for $(x,y)\in\mathbb{W}$}.
\end{align}

We move on to derive equations that characterize the function $A$ and the free boundary $F$. With reference to \eqref{IROld}, for all $(x,y)\in\mathbb{I}$, $w$ should instead satisfy
\begin{align}\label{cond2}
w_y(x,y)-c=0,
\end{align}
implying 
\begin{align}\label{cond3}
w_{yx}(x,y)=0.
\end{align}
Now, we impose the so-called \emph{Smooth Fit} condition, i.e. we suppose that $w\in C^{2,1}(\mathbb{R}\times[0,\bar{y}])$, and therefore by \eqref{Repw},\eqref{cond2} and \eqref{cond3}, $w$ should satisfy
\begin{align}\label{cond4}
A'(y)\psi\big(F(y)+{\beta}y\big)+{\beta}A(y)\psi'\big(F(y)+{\beta}y\big)+R_y(F(y),y)-c=0,
\end{align}
and 
\begin{align}\label{cond5}
A'(y)\psi'\big(F(y)+{\beta}y\big)+{\beta}A(y)\psi''\big(F(y)+{\beta}y\big)+R_{yx}(F(y),y)=0.
\end{align}
Notice that the derivatives of $R$ can be easily obtained from \eqref{PS}, which gives
$$R_y(x,y)=\frac{x}{\rho+\kappa}+\frac{\mu\kappa}{\rho(\rho+\kappa)}-\frac{2\kappa\beta y}{\rho(\rho+\kappa)},\quad\text{and}\quad R_{xy}(x,y)=\left(\rho+\kappa\right)^{-1}.$$

The following lemma provides essential properties of the function $A$ and a lower bound for $F$ that are needed for results of Section \ref{sec:ExFB} and Section \ref{sec:Verification}. Its proof can be found in the appendix.
\begin{lemma}\label{PropA}
	The function $A$ is strictly positive and strictly decreasing. Moreover, $A$ admits the representation
	\begin{align}\label{Ayneu}
	A(y)=\left(\beta\rho(\rho+\kappa)\right)^{-1}\times\frac{(\rho+\kappa)\Big(c\rho+\frac{\kappa\beta}{\rho+\kappa}y-F(y)\Big)\psi'(F(y)+{\beta}y)+\frac{\sigma^2}{2}\psi''(F(y)+{\beta}y)}{\psi'(F(y)+{\beta}y)^2-\psi''(F(y)+{\beta}y)\psi(F(y)+{\beta}y)},
	\end{align} and we have 
	\begin{align}\label{crho}
	F(y)\geq c\rho+\frac{\kappa\beta}{\rho+\kappa}y\geq c\rho,\quad\text{for all }y\in[0,\bar{y}].
	\end{align}
\end{lemma}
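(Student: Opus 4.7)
\emph{Plan.} The proof proceeds in three parts, matching the three claims of the lemma. First, I would derive the closed-form representation \eqref{Ayneu} from the smooth-fit relations \eqref{cond4}–\eqref{cond5}. Second, I would use the properties of $\psi$ (from Lemma \ref{Properties}) together with the inequality $V\geq R$, which follows from the admissibility of the never-install strategy $I^0\equiv 0$, to obtain the strict positivity of $A$ and the lower bound \eqref{crho} on $F$. Third, strict monotonicity of $A$ would drop out immediately from \eqref{cond5}.

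For the formula, I would view \eqref{cond4}–\eqref{cond5} as a linear system in the unknowns $(A'(y),\beta A(y))$, with right-hand side $\bigl(c-R_y(F(y),y),\,-(\rho+\kappa)^{-1}\bigr)^\top$. Setting $z:=F(y)+\beta y$ and applying Cramer's rule yields
\[
\beta A(y)=\frac{\bigl(c-R_y(F(y),y)\bigr)\psi'(z)+\tfrac{1}{\rho+\kappa}\psi(z)}{\psi'(z)^2-\psi(z)\psi''(z)}.
\]
To remove the isolated $\psi(z)$ from the numerator so as to match \eqref{Ayneu}, I would use the ODE satisfied by $\psi$, namely $\tfrac{\sigma^2}{2}\psi''(z)+\kappa(\mu-z)\psi'(z)=\rho\psi(z)$ (a direct consequence of $\mathcal{L}^y\psi(\cdot+\beta y)=\rho\psi(\cdot+\beta y)$ after the change of variable $z=x+\beta y$, since $\mu-\beta y-x=\mu-z$). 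This gives $\tfrac{\psi(z)}{\rho+\kappa}=\tfrac{\sigma^2}{2\rho(\rho+\kappa)}\psi''(z)+\tfrac{\kappa(\mu-z)}{\rho(\rho+\kappa)}\psi'(z)$. Substituting, and collecting terms with the explicit $R_y(F(y),y)=\tfrac{F(y)}{\rho+\kappa}+\tfrac{\mu\kappa}{\rho(\rho+\kappa)}-\tfrac{2\kappa\beta y}{\rho(\rho+\kappa)}$, the $\mu$-contributions cancel and the $F(y)$ and $y$ terms collapse to the coefficient $\tfrac{1}{\rho}\bigl(c\rho+\tfrac{\kappa\beta}{\rho+\kappa}y-F(y)\bigr)$ in front of $\psi'(z)$. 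Multiplying numerator and denominator by $\rho(\rho+\kappa)$ then produces \eqref{Ayneu}.

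For the sign and monotonicity claims, I would read off from Lemma \ref{Properties} that $\psi,\psi',\psi''>0$ and that $\psi$ is strictly log-convex, so that $\psi'(z)^2-\psi(z)\psi''(z)<0$; hence the denominator in \eqref{Ayneu} is strictly negative. Strict positivity of $A$ would be extracted from the structural identity $w(x,y)-R(x,y)=A(y)\psi(x+\beta y)$ in $\mathbb{W}$, combined with the fact that, for $y<\bar y$, the never-install strategy $I^0$ is strictly beaten by, say, installing a fixed positive amount the first time $X$ reaches a sufficiently high level: this shows $V(x,y)>R(x,y)$, so $A(y)\psi(x+\beta y)>0$, and hence $A(y)>0$ because $\psi>0$. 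Given $A(y)>0$ and negative denominator, the numerator in \eqref{Ayneu} must also be strictly negative, and since $\psi'(z),\psi''(z)>0$ this forces $c\rho+\tfrac{\kappa\beta}{\rho+\kappa}y-F(y)<0$, yielding \eqref{crho} (the second inequality being immediate from $y\geq 0$). Finally, isolating $A'$ from \eqref{cond5} produces
\[
A'(y)=-\beta A(y)\frac{\psi''(z)}{\psi'(z)}-\frac{1}{(\rho+\kappa)\psi'(z)},
\]
and both summands are strictly negative by $A(y)>0$ and $\psi'(z),\psi''(z)>0$; therefore $A'(y)<0$.

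\emph{Expected main obstacle.} The delicate point is the coupling between the sign of $A$ and the location of $F$: the representation \eqref{Ayneu} does not, on its own, make either one manifest without the other. My strategy is to decouple them by first obtaining $A>0$ from the probabilistic comparison $V>R$ on the interior of the waiting region (which relies only on admissibility of a concrete non-trivial installation strategy, independent of any fine structure of the free boundary), and then deducing the bound \eqref{crho} from a sign reading of \eqref{Ayneu}.
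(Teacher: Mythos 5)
Most of your proposal coincides with the paper's argument: the representation \eqref{Ayneu} is obtained exactly as you describe (Cramer's rule on the system \eqref{cond4}--\eqref{cond5}, then elimination of the isolated $\psi$ term via the ODE $\tfrac{\sigma^2}{2}\psi''+\kappa(\mu-z)\psi'-\rho\psi=0$, i.e.\ \eqref{Property1} with $k=0$); the lower bound \eqref{crho} is then read off from the signs of numerator and denominator once $A>0$ is known; and the monotonicity of $A$ follows from isolating $A'$ in \eqref{cond5}, which is precisely the paper's relation \eqref{A'y}. Your algebra for the coefficient $\tfrac{1}{\rho}\bigl(c\rho+\tfrac{\kappa\beta}{\rho+\kappa}y-F(y)\bigr)$ checks out.

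The genuine gap is your argument for the strict positivity of $A$. You deduce $A(y)>0$ from $V(x,y)>R(x,y)$ via the identity $w-R=A\psi$ on $\mathbb{W}$, implicitly using $V=w$ there. But at this stage of the construction $A$ is a purely analytic object defined by the smooth-fit system and the terminal condition $A(\bar y)=0$; the identification $V=w$ is the conclusion of the Verification Theorem, whose proof (via Proposition \ref{HJB}, Step 3, and via Proposition \ref{Prop:SolODE}) itself \emph{uses} the positivity of $A$ from this lemma. So your decoupling is circular: the probabilistic comparison $V>R$ (which is itself provable) tells you nothing about the candidate coefficient $A$ until the verification is complete, and the verification cannot be completed without knowing $A>0$. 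The paper avoids this entirely by a backward ODE argument: from \eqref{cond5} one gets $A'(y)=\mathcal{H}(\tilde F(y),A(y))$ with $\mathcal{H}(\bar F,A)=-\tfrac{1}{(\rho+\kappa)\psi'(\bar F)}\bigl(\beta(\rho+\kappa)\psi''(\bar F)A+1\bigr)$, which is strictly negative whenever $A\geq 0$; combined with $A(\bar y)=0$ this forces $A>0$ and $A'<0$ on $[0,\bar y)$ by a standard comparison as one moves backward from $\bar y$. Replacing your positivity step by this argument (which uses only data already available, namely \eqref{cond5}, the boundary condition and the positivity of $\psi^{(k)}$) repairs the proof; everything else you wrote then goes through unchanged.
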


\subsection{The Free Boundary: Existence and Characterization}
\label{sec:ExFB}
For the sake of simplicity, we introduce the function $\tilde{F}$ for a substitution, that is 
\begin{align}\label{Substitution}
\tilde{F}(y)=F(y)+{\beta}y.
\end{align}
We aim to prove the existence and a monotonicity property of $\tilde{F}$, so to draw the implications for $F$ after.
We have 
$$ R_y(F(y),y)  = \frac{\rho F(y) + \mu\kappa - 2\kappa\beta y}{\rho(\rho+\kappa)} =  \frac{\mu\kappa  +\rho \tilde{F}(y) - \beta (\rho + 2\kappa) y}{\rho(\rho+\kappa)} = \tilde R(\tilde F(y),y),$$ 
where $\tilde R:\mathbb{R}^2\mapsto\mathbb{R}$ is defined as
$$\tilde R(x,y) := \frac{\mu\kappa  +\rho x - \beta (\rho + 2\kappa) y}{\rho(\rho+\kappa)}.$$ 
Notice that
$$ \tilde R_x(\tilde{F}(y),y)=(\rho+\kappa)^{-1}=R_{yx}(F(y),y). $$ 
{From now on, we will often use the functions $Q_k:\mathbb{R}\mapsto\mathbb{R}$, $k\in\mathbb{N}_0$, and their first derivatives, given by
	\begin{align}\label{Q}
	\begin{split}
	Q_k(z) := & \psi^{(k)}(z)\psi^{(k+2)}(z)-\psi^{(k+1)}(z)^2, \\
	Q'_k(z) = & \psi^{(k)}(z)\psi^{(k+3)}(z) - \psi^{(k+1)}(z) \psi^{(k+2)}(z).
	\end{split}
	\end{align}
Substituting $\tilde{F}$ for $F$ in both \eqref{cond4} and \eqref{cond5}, and solving for $A$ and $A'$, gives
\begin{align}\label{Aytilde}
A(y)=\beta^{-1}\times\frac{\psi'(\tilde{F}(y))\Big(c-\tilde{R}(\tilde{F}(y),y)\Big)+\left(\rho+\kappa\right)^{-1}\psi(\tilde{F}(y))}{-Q_0(\tilde{F}(y))},
\end{align}
and 
\begin{align}\label{Aprimey}
A'(y)=\frac{\psi''(\tilde{F}(y))\Big(c-\tilde{R}(\tilde{F}(y),y)\Big)+\left(\rho+\kappa\right)^{-1}\psi'(\tilde{F}(y))}{Q_0(\tilde{F}(y))}.
\end{align} Lemma \ref{Properties}-(3) ensures that $Q_k$ is strictly positive for all $k\in\mathbb{N}_0$, and therefore the denominator on the right-hand side of both \eqref{Aytilde} and \eqref{Aprimey} is nonzero.}

In light of the boundary condition $w(x,\bar{y})=R(x,\bar{y})$, cf. Theorem \ref{VerificationTheorem}, the function $A$ should satisfy
\begin{align}\label{BoundCond}
A(\bar{y})=0.
\end{align}
Due to $\eqref{Aytilde}$ and \eqref{BoundCond}, we must have that there exists a point $\tilde{{x}}=\tilde{F}(\bar{y})\in\mathbb{R}$ solving $H(x)=0$, where $H:\mathbb{R}\mapsto\mathbb{R}$ is defined as
\begin{align}\label{hxinf}
H(x):=\psi'(x)\left(c-\tilde{R}(x,\bar{y})\right)+\left(\rho+\kappa\right)^{-1}\psi(x).
\end{align}

\begin{lemma}\label{UniqueSol}
	There exists a unique solution $\tilde{x}\in\mathbb{R}$ to the equation $H(x)=0$.
\end{lemma}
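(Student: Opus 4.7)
The plan is to show that $H$ is strictly unimodal with the right boundary behavior so that it crosses zero exactly once. The key computation is the derivative of $H$.

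First I would differentiate $H$ with respect to $x$. Since $\tilde{R}_x(x,\bar{y})=1/(\rho+\kappa)$, we obtain
\[
H'(x)=\psi''(x)\bigl(c-\tilde{R}(x,\bar{y})\bigr)-\psi'(x)\tilde{R}_x(x,\bar{y})+(\rho+\kappa)^{-1}\psi'(x)=\psi''(x)\bigl(c-\tilde{R}(x,\bar{y})\bigr),
\]
the two linear-in-$\psi'$ terms cancelling exactly. By Lemma \ref{Properties}, $\psi$ is strictly increasing and strictly convex, so $\psi''>0$. The factor $c-\tilde{R}(x,\bar{y})$ is an affine function of $x$ with negative slope $-1/(\rho+\kappa)$, so it vanishes at a unique point $x_0=c\rho-\mu\kappa/(\rho+\kappa)+\beta(\rho+2\kappa)\bar{y}/\rho$, is strictly positive on $(-\infty,x_0)$, and strictly negative on $(x_0,+\infty)$. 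Consequently $H$ is strictly increasing on $(-\infty,x_0)$ and strictly decreasing on $(x_0,+\infty)$, with a unique maximum at $x_0$.

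Next I would analyse the two limits. As $x\to+\infty$, $\psi(x)>0$ is bounded below by a positive increasing function, while $c-\tilde{R}(x,\bar{y})\to-\infty$ linearly and $\psi'(x)$ grows (at least linearly, in fact exponentially fast for the increasing OU fundamental solution); hence $H(x)\to-\infty$. As $x\to-\infty$, both $\psi(x)$ and $\psi'(x)$ tend to $0$ faster than any polynomial (standard asymptotics of the increasing OU fundamental solution, cf.\ Appendix~1 of \cite{Borodin}), so that $\psi'(x)(c-\tilde{R}(x,\bar{y}))\to 0$ despite the linear growth of $c-\tilde{R}(\cdot,\bar{y})$, and analogously $\psi(x)\to 0$. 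Therefore $\lim_{x\to-\infty}H(x)=0$.

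Combining the two facts: because $H$ is strictly increasing on $(-\infty,x_0)$ with $H(-\infty)=0$, we have $H>0$ on $(-\infty,x_0]$, and in particular $H(x_0)>0$; because $H$ is strictly decreasing on $(x_0,+\infty)$ from the positive value $H(x_0)$ to $-\infty$, by the intermediate value theorem and strict monotonicity there is exactly one $\tilde{x}\in(x_0,+\infty)$ with $H(\tilde{x})=0$, and no zero in $(-\infty,x_0]$. This gives existence and uniqueness.

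The main obstacle is the $-\infty$ limit: one must justify that $\psi'(x)$ decays rapidly enough to overwhelm the linear growth of $c-\tilde{R}(x,\bar{y})$. This can be handled by invoking the asymptotic expansions of the increasing fundamental solution of the OU generator (parabolic cylinder function representation), which are collected in Lemma \ref{Properties}; alternatively, one can exploit the ODE $\mathcal{L}^{\bar y}\psi=\rho\psi$ rewritten as $\psi''=\tfrac{2}{\sigma^2}[\rho\psi-\kappa((\mu-\beta\bar y)-x)\psi']$ to derive the required super-polynomial decay of $\psi$ and $\psi'$ at $-\infty$ by a Grönwall-type argument. Everything else is essentially a sign-and-monotonicity count.
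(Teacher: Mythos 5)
Your route is genuinely different from the paper's: the paper merely rewrites $H$ and outsources the existence/uniqueness to Lemma 4.4 of \cite{Koch2} after adjusting the cost parameter, whereas you give a self-contained unimodality argument. The core of your proof is right and the cancellation $H'(x)=\psi''(x)\bigl(c-\tilde R(x,\bar y)\bigr)$ is correct, so $H$ is strictly increasing on $(-\infty,x_0)$ and strictly decreasing on $(x_0,\infty)$, $x_0$ being the unique zero of the decreasing affine function $c-\tilde R(\cdot,\bar y)$ (your explicit formula for $x_0$ is miscomputed --- it should read $x_0=c(\rho+\kappa)-\mu\kappa/\rho+\beta(\rho+2\kappa)\bar y/\rho$ --- but the value is never used).

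Two steps need repair. First, your claim that $\psi$ and $\psi'$ decay at $-\infty$ faster than any polynomial is false: from the parabolic cylinder representation \eqref{eq34} and the asymptotics $D_\nu(z)\sim e^{-z^2/4}z^\nu$ as $z\to+\infty$, the Gaussian factors cancel and $\psi(x)\sim C|x|^{-\rho/\kappa}$, $\psi'(x)\sim C'|x|^{-\rho/\kappa-1}$, i.e.\ only polynomial decay. The limit $\lim_{x\to-\infty}H(x)=0$ happens to survive because $\psi'$ decays one power faster than $\psi$, but you do not need any asymptotics here at all: for $x\le x_0$ both $c-\tilde R(x,\bar y)\ge 0$ and $\psi,\psi'>0$, so $H(x)\ge(\rho+\kappa)^{-1}\psi(x)>0$ term by term, which already rules out zeros on $(-\infty,x_0]$ and gives $H(x_0)>0$. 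Second, your argument that $H(x)\to-\infty$ as $x\to+\infty$ is not conclusive as stated, because the positive term $(\rho+\kappa)^{-1}\psi(x)$ also diverges (super-exponentially), so you are comparing two divergent quantities; the clean fix is to use your own derivative formula: for $x\ge x_0+1$ one has $c-\tilde R(x,\bar y)\le -\varepsilon<0$ and $\psi''(x)\ge\psi''(x_0+1)>0$ (Lemma \ref{Properties}-(2)), hence $H'(x)\le-\varepsilon\,\psi''(x_0+1)$ is bounded away from zero and $H$ eventually becomes negative. With these two corrections the sign-and-monotonicity count closes the proof exactly as you intend.
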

\begin{proof}
	We rewrite $H(x):=-\left(\rho+\kappa\right)^{-1}\left(\psi'(x)\left((\rho+\kappa)\tilde{R}(x,\bar{y})-c(\rho+\kappa)\right)-\psi(x)\right).$ Now, the proof is a slight modification of the proof of Lemma 4.4 in \cite{Koch2} upon adjusting the cost factor in \cite{Koch2} by $c(\rho+\kappa)-\frac{\mu\kappa- \beta (\rho + 2\kappa) \bar{y}}{\rho}$.

\end{proof}
Differentiating \eqref{Aytilde}, we find
\begin{align}\label{Ay'}
A'(y)=&(\beta(\rho+\kappa))^{-1}\times\frac{P(y,\tilde{F}(y),\tilde{F}'(y))}{Q_0(\tilde{F}(y))^2},
\end{align}
where $P:\mathbb{R}^3\mapsto\mathbb{R}$ is given by
\begin{align*}
\begin{split}
P(y,z,w):=&w(\rho+\kappa)\Big(c-\tilde{R}(z,y)\Big)\psi(z)\Big(\psi'''(z)\psi'(z)-\psi''(z)^2\Big)\\
&+\frac{\beta(\rho+2\kappa)}{\rho}\psi'(z)\Big(\psi'(z)^2-\psi(z)\psi''(z)\Big)-w\psi(z)\Big(\psi'(z)\psi''(z)-\psi(z)\psi'''(z)\Big)\\
=&-\frac{\beta(\rho+2\kappa)}{\rho}\psi'(z)Q_0(z)+w D(y,z), 
\end{split}
\end{align*}
with $D:\mathbb{R}^2\mapsto\mathbb{R}$ defined as
\begin{align}\label{DenOld}
\begin{split}
D(y,z) 
{ = \psi(z)\bigg[ } & { (\rho + \kappa) (c -\tilde{R}(z,y)) Q_1(z) + Q_0'(z) \bigg]. }
\end{split}
\end{align}
Now, equating both expressions \eqref{Aprimey} and \eqref{Ay'}, we get
\begin{align}\label{eq6}
P(y,\tilde{F}(y),\tilde{F}'(y))=\beta Q_0(\tilde{F}(y))\Big((\rho+\kappa)\left(c-\tilde{R}(\tilde{F}(y),y)\right)\psi''(\tilde{F}(y))+\psi'(\tilde{F}(y))\Big).
\end{align}
Letting $N:\mathbb{R}^2\mapsto\mathbb{R}$ be 
such that
\begin{align}\label{NomOld}
\begin{split}
N(y,z)=Q_0(z)\Bigg(&\frac{\rho+2\kappa}{\rho}\psi'(z)+\bigg((\rho+\kappa)\left(c-\tilde{R}(z,y)\right)\psi''(z)+\psi'(z)\bigg)\Bigg),
\end{split}
\end{align}
we obtain from \eqref{eq6} the ODE
\begin{align}\label{ODE}
\begin{split}
\tilde{F}'(y)=\mathcal{G}(y,\tilde{F}(y)),
\end{split}
\end{align}
with boundary condition $\tilde{F}(\bar{y})=\tilde{x}$, cf. Lemma \ref{UniqueSol}, and where $\mathcal{G}:(\mathbb{R}\times\mathbb{R})\setminus\{(y,z)\in\mathbb{R}^2:D(y,z)=0\}\mapsto\mathbb{R}$ is such that
\begin{align}\label{ODEalt}
\begin{split}
\mathcal{G}(y,z)=\beta\times\frac{N(y,z)}{D(y,z)}.
\end{split}
\end{align}

The next goal is to prove that the ODE \eqref{ODE} admits a unique solution $\tilde{F}$ on $[0,\bar{y}]$ such that $\tilde{F}'(y)\geq\beta$. As a preliminary result we show that the previous property holds at $\bar{y}$, that is $\mathcal{G}(\bar{y},\tilde{x})>\beta$.
\begin{lemma}\label{lemma:ybar}
	For any $\bar{y}>0$, we have
	$D(\bar{y},\tilde{F}(\bar{y}))>0,$ and it holds 
	\begin{align}\label{F'ybar}
	\tilde{F}'(\bar{y})>\beta.
	\end{align}
\end{lemma}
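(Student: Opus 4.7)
The plan is to evaluate both $D$ and $N$ at $(\bar{y},\tilde{x})$ by exploiting the defining equation $H(\tilde{x})=0$, which by \eqref{hxinf} is equivalent to the key relation $(\rho+\kappa)(c-\tilde{R}(\tilde{x},\bar{y})) = -\psi(\tilde{x})/\psi'(\tilde{x})$. Inserting this into the formula \eqref{DenOld} for $D$ and unfolding the definitions of $Q_0$, $Q_1$, $Q_0'$, the $\psi'''$-terms cancel and I expect the simplification
\[
D(\bar{y},\tilde{x}) \;=\; \frac{\psi(\tilde{x})\,\psi''(\tilde{x})}{\psi'(\tilde{x})}\,Q_0(\tilde{x}).
\]
Now $\psi,\psi'>0$ by Lemma \ref{Properties}-(1),(5) and $Q_0>0$ by Lemma \ref{Properties}-(3); since $Q_0 = \psi\psi'' - (\psi')^2$, the positivity of $Q_0$ together with $\psi>0$ forces $\psi''>0$. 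Hence $D(\bar{y},\tilde{x})>0$, which is the first part of the claim.

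For the second part, since $\tilde{F}'(\bar{y}) = \beta\, N(\bar{y},\tilde{x})/D(\bar{y},\tilde{x})$ and $D>0$, it suffices to show $N > D$ at $(\bar{y},\tilde{x})$. Substituting the same key relation into \eqref{NomOld} and using that $(\rho+2\kappa)/\rho + 1 = 2(\rho+\kappa)/\rho$, the routine algebra should collapse to
\[
N(\bar{y},\tilde{x}) - D(\bar{y},\tilde{x}) \;=\; \frac{2\,Q_0(\tilde{x})}{\psi'(\tilde{x})}\left[\frac{\rho+\kappa}{\rho}\,\psi'(\tilde{x})^2 \;-\; \psi(\tilde{x})\,\psi''(\tilde{x})\right].
\]
The sign of $N-D$ therefore reduces to the sign of the bracketed expression.

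The main obstacle is to verify that this bracket is strictly positive, and my strategy is to exploit the ODE $\tfrac{\sigma^2}{2}\psi''(z) + \kappa(\mu-z)\psi'(z) = \rho\,\psi(z)$ satisfied by $\psi$. Differentiating this identity once in $z$ shows that $\psi'$ itself solves the analogous equation but with eigenvalue $\rho+\kappa$, namely $\tfrac{\sigma^2}{2}\psi'''(z) + \kappa(\mu-z)\psi''(z) = (\rho+\kappa)\psi'(z)$. Multiplying the original ODE by $\psi''$ and the derived one by $\psi'$, the drift terms cancel upon subtraction and yield the clean identity
\[
\frac{\sigma^2}{2}\,Q_1(z) \;=\; (\rho+\kappa)\,\psi'(z)^2 \;-\; \rho\,\psi(z)\,\psi''(z),
\]
equivalently $\tfrac{\rho+\kappa}{\rho}\psi'(\tilde{x})^2 - \psi(\tilde{x})\psi''(\tilde{x}) = \tfrac{\sigma^2}{2\rho}Q_1(\tilde{x})$, which is strictly positive by Lemma \ref{Properties}-(3). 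Combining the three displays gives $N(\bar{y},\tilde{x}) > D(\bar{y},\tilde{x})>0$, hence $\tilde{F}'(\bar{y}) = \beta\, N/D > \beta$, as claimed.
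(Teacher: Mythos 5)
Your proof is correct, and the two halves differ in how closely they track the paper. For the positivity of $D(\bar{y},\tilde{F}(\bar{y}))$ you do exactly what the paper does: substitute the relation $(\rho+\kappa)\bigl(c-\tilde{R}(\tilde{x},\bar{y})\bigr)=-\psi(\tilde{x})/\psi'(\tilde{x})$ coming from $H(\tilde{x})=0$ into \eqref{DenOld} and simplify to $\psi\psi''Q_0/\psi'$, which is the paper's \eqref{eq36}; your extra observation that $Q_0>0$ and $\psi>0$ already force $\psi''>0$ is fine (the paper gets convexity from Lemma \ref{Properties}-(2)). For the inequality $N>D$ you genuinely depart from the paper: the paper invokes the general implication of Lemma \ref{Implications} ($D\geq 0\Rightarrow N>D$ at \emph{any} point), whose proof in the appendix goes through the monotonicity of the ratios $\Psi_k$, whereas you compute $N-D$ explicitly at the single point $(\bar{y},\tilde{x})$, where the $H=0$ relation collapses everything to the bracket $\tfrac{\rho+\kappa}{\rho}(\psi')^2-\psi\psi''$, and then establish its positivity via the Wronskian-type identity $\tfrac{\sigma^2}{2}Q_1=(\rho+\kappa)(\psi')^2-\rho\psi\psi''$ obtained by cross-multiplying the ODEs for $\psi$ and $\psi'$ (this identity is correct and is essentially \eqref{Property1} with $k=0,1$ combined). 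Your route is shorter and self-contained for this lemma, needing only $Q_1>0$; the paper's route is longer here but pays off because Lemma \ref{Implications} is reused verbatim in Proposition \ref{Prop:SolODE} and in the Lipschitz estimate of Theorem \ref{Th:4.8}, where the argument must work at points other than $(\bar{y},\tilde{x})$ and your pointwise simplification would not apply.
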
 

\begin{proof}
	Recall the function $H$ from \eqref{hxinf} which is such that $H(\tilde{F}(\bar{y}))=0$. Therefore $\bar{y}$ satisfies 
	\begin{align}\label{eq71}
	(\rho+\kappa)\left(c-\tilde{R}(\tilde{F}(\bar{y}),\bar{y})\right)=-\frac{\psi(\tilde{F}(\bar{y}))}{\psi'(\tilde{F}(\bar{y}))}.
	\end{align}
	We get from \eqref{DenOld} and \eqref{eq71} that
	\begin{align}\label{eq36}
	D(\bar{y},\tilde{F}(\bar{y}))=\frac{Q_0(\tilde{F}(\bar{y}))\psi(\tilde{F}(\bar{y}))\psi''(\tilde{F}(\bar{y}))}{\psi'(\tilde{F}(\bar{y}))}>0,
	\end{align}
	upon recalling that $Q_0 > 0$. Now, Lemma \ref{Implications} implies $N(\bar{y},\tilde{F}(\bar{y}))-D(\bar{y},\tilde{F}(\bar{y}))>0$. 
	Hence, we find
	\begin{align}
	\tilde{F}'(\bar{y})=\mathcal{G}(\bar{y},\tilde{F}(\bar{y}))=\beta\times\frac{N(\bar{y},\tilde{F}(\bar{y}))}{D(\bar{y},\tilde{F}(\bar{y}))}>\beta.
	\end{align}
\end{proof}



Now, we state the main result in this subsection. It guarantees the existence and uniqueness of a solution $\tilde{F}$ on $[0,\bar{y}]$ of \eqref{ODE} which is such that $\tilde{F}'(y)>\beta$ for all $y\in[0,\bar{y}]$. Its proof can be found in the appendix.
\begin{proposition}\label{Prop:SolODE}
	For any $\bar{y}>0$, there exists a unique solution $\tilde{F}$ on $[0,\bar{y}]$ of the ODE \eqref{ODE} with boundary condition $\tilde F(\bar{y})=\tilde{x}$. Moreover, $$\tilde{F}'(y)\geq\beta,\quad \text{for all $y\in[0,\bar{y}]$}.$$	
\end{proposition}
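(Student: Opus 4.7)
The plan is a classical local-existence plus continuation argument for the backward Cauchy problem $\tilde{F}'(y)=\mathcal{G}(y,\tilde{F}(y))$ with terminal datum $\tilde{F}(\bar{y})=\tilde{x}$, carrying along the a priori bound $\tilde{F}'(y)\geq\beta$ so as to simultaneously prevent blow-up of $\tilde{F}$ and vanishing of the denominator $D$. Since $\psi$ is real-analytic, the right-hand side $\mathcal{G}$ is $C^1$ (hence locally Lipschitz) on the open set $\{(y,z):D(y,z)\neq 0\}$. Lemma \ref{lemma:ybar} provides $D(\bar{y},\tilde{x})>0$ and $\mathcal{G}(\bar{y},\tilde{x})>\beta$, so Picard-Lindel\"of produces a unique $C^1$ solution on some interval $[\bar{y}-\delta,\bar{y}]$, and continuity upgrades this to $\tilde{F}'(y)>\beta$ on a slightly smaller neighbourhood of $\bar{y}$.

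Next, let $y_\star\in[0,\bar{y})$ denote the infimum of those $y\in[0,\bar{y}]$ for which the solution extends to $[y,\bar{y}]$ while maintaining $D(z,\tilde{F}(z))>0$ and $\tilde{F}'(z)\geq\beta$ throughout. On any such extension, integrating the constraint backward from $\bar{y}$ yields $\tilde{F}(y)\leq\tilde{x}-\beta(\bar{y}-y)\leq\tilde{x}$, while the monotonicity of $\tilde{F}$ supplies a matching lower bound. Hence $\tilde{F}$ stays in a compact subset of $\mathbb{R}$ and the one-sided limit $\tilde{F}(y_\star^+):=\lim_{z\downarrow y_\star}\tilde{F}(z)$ exists and is finite. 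By the standard ODE continuation principle, if $y_\star>0$ then one of two boundary degenerations must occur at $(y_\star,\tilde{F}(y_\star^+))$: either $D(y_\star,\tilde{F}(y_\star^+))=0$ (loss of Lipschitz regularity of $\mathcal{G}$), or $\tilde{F}'(y_\star^+)=\beta$, which is equivalent to $N(y_\star,\tilde{F}(y_\star^+))=D(y_\star,\tilde{F}(y_\star^+))$ (exit from the monotonicity constraint).

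Both alternatives are ruled out by invoking Lemma \ref{Implications}, used in the same spirit as in the proof of Lemma \ref{lemma:ybar}: it delivers the strict inequalities $D(y,z)>0$ and $N(y,z)-D(y,z)>0$ uniformly on the admissible region of $(y,z)$ singled out by the a priori bounds of the preceding step (and the lower bound \eqref{crho} on $F$). Since these bounds place the pair $(y_\star,\tilde{F}(y_\star^+))$ inside that region, both strict inequalities hold at this point, contradicting either candidate degeneration. Consequently $y_\star=0$, giving a global solution on $[0,\bar{y}]$ with $\tilde{F}'(y)\geq\beta$ throughout. Uniqueness on $[0,\bar{y}]$ is inherited from the Picard-Lindel\"of local uniqueness via the same continuation argument.

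The main obstacle is precisely the structural content of Lemma \ref{Implications}: verifying $D>0$ and $N-D>0$ uniformly on the admissible region requires delicate manipulation of the Ornstein-Uhlenbeck fundamental solution $\psi$ and the auxiliary quantities $Q_k$ from \eqref{Q}, leaning on the sign information for $\psi^{(k)}$ and the positivity of $Q_k$ recorded in Lemma \ref{Properties}. The surrounding ODE machinery -- local existence, a priori bounds from the monotonicity constraint, compactness of the orbit, and continuation -- is textbook and contributes no essential difficulty.
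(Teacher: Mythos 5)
Your overall architecture (local Picard--Lindel\"of existence at $(\bar{y},\tilde{x})$, backward continuation, a priori bounds to prevent blow-up, and exclusion of the two possible degenerations) matches the paper's, but there is a genuine gap at the decisive step: ruling out $D(y,\tilde{F}(y))\to 0$. You claim that Lemma \ref{Implications} ``delivers the strict inequalities $D(y,z)>0$ and $N(y,z)-D(y,z)>0$ uniformly on the admissible region.'' It does not. Lemma \ref{Implications} only states the one-way implication $D(y,z)\geq 0\Rightarrow N(y,z)>D(y,z)$; it gives no information about the sign of $D$ itself, and there is no uniform positivity of $D$ on the rectangle cut out by the a priori bounds. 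Consequently your argument cannot exclude the scenario in which $D(y,\tilde{F}(y))$ decreases to $0$ as $y$ decreases to $y_\star$ (with $\tilde{F}'\to+\infty$ while $\tilde{F}$ itself stays bounded), which is exactly the degeneration one must work to rule out. The paper's proof handles this with a separate, nontrivial computation: it derives the closed-form identity $\tilde{D}(y,\tilde{F}(y))=\tfrac{2}{\sigma^2}\big[\tilde{F}(y)-c\rho-\tfrac{(\rho+2\kappa)\beta}{\rho+\kappa}y-\kappa\beta\psi'(\tilde{F}(y))A(y)\big]$, computes the total derivative $\tfrac{d}{dy}\tilde{D}(y,\tilde{F}(y))=\tfrac{2}{\sigma^2}(\tilde{F}'(y)-\beta)G(y,\tilde{F}(y))$, and shows via \eqref{Property1} and the positivity of $Q_1,Q_2$ that $G<0$ at any zero of $\tilde{D}$. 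Combined with $\tilde{D}(\bar{y},\tilde{x})>0$ and $\tilde{F}'>\beta$, a first-crossing argument then shows $\tilde{D}$ cannot vanish. Your proposal contains no substitute for this computation.

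A secondary inaccuracy: you assert that ``the monotonicity of $\tilde{F}$ supplies a matching lower bound.'' Going backward from $\bar{y}$, monotonicity ($\tilde{F}'\geq\beta>0$) gives only the \emph{upper} bound $\tilde{F}(y)\leq\tilde{x}$; it does not prevent $\tilde{F}$ from diverging to $-\infty$ as $y$ decreases if $\tilde{F}'$ blows up. The lower bound is a structural fact coming from the positivity of $A$ and the representation \eqref{Ayneu} (i.e., an adaptation of the proof of Lemma \ref{PropA} yielding \eqref{crho}); you cite \eqref{crho} later, but the bound must be established along the maximal solution, not merely assumed as part of the ``admissible region.''
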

\begin{corollary} \label{Cor:4.5}
	The free boundary $F$ as in \eqref{WR} and \eqref{IR} is well defined. Moreover, it is strictly increasing and given by $$F(y)=\tilde{F}(y)-\beta y,\quad\text{for all }y\in[0,\bar{y}].$$
%
%
	
\end{corollary}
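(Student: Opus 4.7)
The plan is to read Corollary~\ref{Cor:4.5} off directly from Proposition~\ref{Prop:SolODE} together with the substitution \eqref{Substitution}. Proposition~\ref{Prop:SolODE} supplies a unique $C^{1}$ solution $\tilde{F}$ of the ODE \eqref{ODE} on $[0,\bar{y}]$ with terminal value $\tilde{F}(\bar{y}) = \tilde{x}$, where $\tilde{x}$ is the unique root of $H$ from Lemma~\ref{UniqueSol}. Inverting the substitution by setting $F(y) := \tilde{F}(y) - \beta y$ then unambiguously defines a function on $[0,\bar{y}]$. To see that this $F$ is the free boundary that separates $\mathbb{W}$ and $\mathbb{I}$ in the sense of \eqref{WR}--\eqref{IR}, I would recall that the boundary value $\tilde{F}(\bar{y}) = \tilde{x}$ was forced precisely by the matching condition $A(\bar{y}) = 0$ via \eqref{Aytilde} and \eqref{BoundCond}, and that the derivation of the ODE \eqref{ODE} itself came from equating the two expressions \eqref{Aytilde} and \eqref{Aprimey} for $A$ and $A'$; thus $F$ is consistent with the smooth-fit conditions \eqref{cond4}--\eqref{cond5} that defined the geometry in the first place.

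For the monotonicity, I would simply differentiate: $F'(y) = \tilde{F}'(y) - \beta$. Proposition~\ref{Prop:SolODE} gives $\tilde{F}'(y) \geq \beta$, hence $F$ is non-decreasing. To upgrade this to strict monotonicity, I would argue by contradiction: if $\tilde{F}'(y_0) = \beta$ at some $y_0 \in [0,\bar{y}]$, then the ODE \eqref{ODEalt} would force $N(y_0,\tilde{F}(y_0)) = D(y_0,\tilde{F}(y_0))$. But the computation performed in the proof of Lemma~\ref{lemma:ybar}, together with Lemma~\ref{Implications}, yields $N > D > 0$ wherever $\tilde{F}$ lives on $[0,\bar{y}]$, a contradiction; consequently $F'(y) > 0$ on $[0,\bar{y}]$.

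The main—indeed essentially the only—obstacle is this strict inequality $\tilde{F}' > \beta$, since Proposition~\ref{Prop:SolODE} asserts only the weak version $\tilde{F}' \geq \beta$. If one wished to avoid reopening the inequality $N > D$, an alternative route would be to exploit that $\tilde{F}$ is real-analytic (as the unique solution to an ODE with analytic right-hand side on its domain of definition) and invoke the identity theorem: the set $\{\tilde{F}' = \beta\}$, if non-empty, would have to be isolated or equal to the whole interval; the latter is ruled out at the endpoint by Lemma~\ref{lemma:ybar}, and the former still yields $F$ strictly increasing. Aside from this point, the corollary is a direct bookkeeping consequence of \eqref{Substitution} and the existence/uniqueness established in Proposition~\ref{Prop:SolODE}.
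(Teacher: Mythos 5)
Your proposal is correct and follows essentially the same route as the paper, which likewise reads the corollary off Proposition~\ref{Prop:SolODE} via the substitution \eqref{Substitution} and asserts $F'(y)=\tilde F'(y)-\beta>0$. Your extra care about the weak inequality in the statement of Proposition~\ref{Prop:SolODE} versus the strict one needed here is well placed: the strictness is indeed supplied by the proposition's proof, where $N(y,\tilde F(y))>D(y,\tilde F(y))>0$ is shown to hold on all of $I_{\max}\supset[0,\bar y]$ (the solution cannot cross the set $\{D=0\}$, which is excluded from the domain of $\mathcal{G}$), exactly the contradiction argument you sketch.
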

\begin{proof}
	The existence and uniqueness is an implication of Proposition \ref{Prop:SolODE}. It also ensures that $$F'(y)=\tilde{F}'(y)-\beta>0,\quad\text{for all }y\in[0,\bar{y}].$$
\end{proof}

\subsection{The Optimal Strategy and the Value Function: Verification}
\label{sec:Verification}
In the following, the initial price level at which the company starts to install solar panels is denoted by $x_0:=F(0)$, and we define $\bar{x}:=\tilde{x}-\beta\bar{y}$. Since $F$ is strictly increasing, its inverse function exists on $[x_0,\bar{x}]$ and is denoted by $F^{-1}$. 

We divide the (candidate) \emph{installation region} $\mathbb{I}$ into $$\mathbb{I}_1:=\{(x,y)\in\mathbb{R}\times[0,\bar{y}):\,x\in[F(y),\bar{x})\},$$ and $$\mathbb{I}_2:=\{(x,y)\in\mathbb{R}\times[0,\bar{y}):\,x\geq \bar{x}\}.$$ An optimal installation strategy can be described as follows: in $\mathbb{W}$ (cf. \eqref{WR}), that is if the current price $x$ is sufficiently low such that $x<F(y)$, then the company does not increase the level of installed power. Whenever the price crosses $F(y)$, then the company makes infinitesimal installations so to keep the state process $(X,Y)$ inside $\overline{\mathbb{W}}$. Conversely, if the current price $x$ is sufficiently large such that $x\geq F(y)$ (i.e. in $\mathbb{I}$, cf. \eqref{IR}), then the company makes an instantaneous lump sum installation. In particular, on the one hand, whenever the maximum level of installed power $\bar{y}$, that the firm is able to reach, is sufficiently high (that is $(x,y)\in\mathbb{I}_1$),  then the company pushes the state process $(X,Y)$ immediately to the locus of points $\{(x,y)\in\mathbb{R}\times[0,\bar{y}]:\, x=F(y)\}$ in direction $(0,1)$, so to increase the level of installed power by $F^{-1}(x)-y$ units. The associated payoff to this action is then the difference of the continuation value starting from the new state $(x,F^{-1}(x))$ and the costs associated to the installation of additional solar panels, that is $c(F^{-1}(x)-y)$. On the other hand, whenever the firm has to restrict its actions due to the upper bound 
$\bar{y}$ (that is $(x,y)\in\mathbb{I}_2$), then the company immediately installs the maximum number of panels, so to increase the level of installed power up to $\bar{y}$ units, and the associated payoff to such a strategy is $R(x,\bar{y})-c(\bar{y}-y)$.

In light of the previous discussion, we now define our candidate value function $w:\mathbb{R}\times[0,\bar{y}]\mapsto\mathbb{R}$ as
{\begin{align}\label{CaFnc}
w(x,y)=
\begin{cases}
A(y)\psi\big(x+{\beta}y\big)+R(x,y),\vspace{1.5mm}\quad &\text{if } x\in \mathbb{W}\cup\left((-\infty,\bar{x})\times\{\bar{y}\}\right),\\
A(F^{-1}(x))\psi\big(x+{\beta}F^{-1}(x)\big)+R(x,F^{-1}(x))\vspace{1,5mm}\\
-c(F^{-1}(x)-y),&\text{if } (x,y)\in\mathbb{I}_1,\\
R(x,\bar{y})-c(\bar{y}-y),\vspace{1.5mm} &\text{if } (x,y)\in\mathbb{I}_2\cup\left([\bar{x},\infty)\times\{\bar{y}\}\right).
\end{cases}
\end{align} }

The next two results verify that $w$ is a classical solution to the HJB equation \eqref{HJB1}.
\begin{lemma}\label{Smooth}
	The function $w$ is $C^{2,1}(\mathbb{R}\times[0,\bar{y}])$.
\end{lemma}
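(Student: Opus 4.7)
Denote the three formulas appearing in \eqref{CaFnc} by $w_1,w_2,w_3$. In the relative interior of each of the three regions, $w$ is manifestly $C^\infty$ by composition: $\psi$ is smooth, $R$ is polynomial in $(x,y)$, $A$ is smooth by \eqref{Aytilde} together with the smoothness of $\tilde F$ (which follows from Proposition \ref{Prop:SolODE}, smoothness of $\mathcal G$ and non-vanishing of $D(y,\tilde F(y))$ along the solution), and $F^{-1}$ is smooth since $F'>0$ by Corollary \ref{Cor:4.5}. The task thus reduces to verifying $C^{2,1}$ pasting at two interfaces: (a) the free boundary $\Gamma:=\{(F(y),y):\,0\leq y<\bar y\}$ separating the first two pieces; and (b) the segment $\{\bar x\}\times[0,\bar y]$ where the second and third pieces meet, together with the corner $(\bar x,\bar y)$ at which the extension of Region I to $y=\bar y$ also abuts.

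\textbf{Pasting on $\Gamma$.} I will exploit the rewriting $w_2(x,y)=w_1(x,F^{-1}(x))+c(y-F^{-1}(x))$ and introduce
\[\Phi(x):=\partial_y w_1(x,F^{-1}(x))-c,\qquad x\in[x_0,\bar x].\]
The first smooth-fit condition \eqref{cond4} is precisely the statement $\Phi(x)\equiv 0$, which in turn yields $\Phi'(x)\equiv 0$. Differentiating the above identity for $w_2$ in $x$ gives
\[\partial_x w_2 = \partial_x w_1(x,F^{-1}(x)) + \Phi(x)(F^{-1})'(x),\qquad \partial_y w_2 = c,\]
\[\partial_{xx} w_2 = \partial_{xx} w_1(x,F^{-1}(x)) + \partial_{xy} w_1(x,F^{-1}(x))(F^{-1})'(x) + \Phi'(x)(F^{-1})'(x) + \Phi(x)(F^{-1})''(x).\]
On $\Gamma$ one has $F^{-1}(x)=y$ and $\Phi=\Phi'=0$, hence: (i) $w_2(F(y),y)=w_1(F(y),y)$; (ii) $\partial_y w_2 = c = \partial_y w_1(F(y),y)$ by \eqref{cond4}; (iii) $\partial_x w_2(F(y),y)=\partial_x w_1(F(y),y)$; and (iv) $\partial_{xx} w_2(F(y),y)-\partial_{xx} w_1(F(y),y) = \partial_{xy} w_1(F(y),y)\cdot(F^{-1})'(F(y))$. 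The right-hand side of (iv) vanishes by the second smooth-fit condition \eqref{cond5}, which gives $C^{2,1}$ pasting across $\Gamma$.

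\textbf{Pasting on $\{x=\bar x\}$ and main obstacle.} Since $F^{-1}(\bar x)=\bar y$ and $A(\bar y)=0$ by \eqref{BoundCond}, one gets $w_2(\bar x,y) = R(\bar x,\bar y) - c(\bar y-y) = w_3(\bar x,y)$. Evaluating the formulas of the previous paragraph at $y=\bar y$ and using $A(\bar y)=0$ together with $R_{xx}\equiv 0$ yields $\partial_x w_2(\bar x,y)=R_x(\bar x,\bar y)=\partial_x w_3(\bar x,y)$, $\partial_{xx} w_2(\bar x,y)=0=\partial_{xx} w_3(\bar x,y)$, and $\partial_y w_2=c=\partial_y w_3$; the same reasoning handles the corner $(\bar x,\bar y)$ and the segment $\{(x,\bar y):x<\bar x\}$, which again reduces to $A(\bar y)=0$. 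The only substantive step is thus the pasting on $\Gamma$, and the decisive observation is that both smooth-fit conditions \eqref{cond4}, \eqref{cond5} were imposed precisely in the derivation of $A$ via \eqref{Aytilde} and of the ODE \eqref{ODE} for $\tilde F$, so that the $C^{2,1}$ regularity across $\Gamma$ holds by construction, provided $F\in C^2$ --- which follows in turn from differentiating \eqref{ODE} once and noting that $\mathcal G$ is $C^1$ where defined.
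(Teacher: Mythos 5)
Your proof is correct and follows essentially the same route as the paper: smoothness in the interior of each piece, then matching of $w$, $w_x$, $w_{xx}$, $w_y$ across the free boundary via the smooth-fit conditions \eqref{cond4} and \eqref{cond5} (which kill exactly the chain-rule terms involving $(F^{-1})'$), and across $\{x=\bar x\}$ and $\{y=\bar y\}$ via $A(\bar y)=0$. Your packaging through $\Phi(x)=\partial_y w_1(x,F^{-1}(x))-c\equiv 0$ is only a notational streamlining of the paper's direct computation of \eqref{eq1} and \eqref{eq3}, not a different argument.
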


\begin{proof}
	In the following, we denote by $\I(\cdot)$ the interior of a set. Clearly, by \eqref{CaFnc} it holds for all $(x,y)\in\I(\mathbb{W})$ that
	\begin{align}\label{DerWait}
	w_x(x,y)&=A(y)\psi^\prime(x+{\beta}y)+R_x(x,y),\\
	w_{xx}(x,y)&=A(y)\psi^{\prime\prime}(x+{\beta}y),\\\label{DerWait3}
	w_y(x,y)&=A^\prime(y)\psi(x+{\beta}y)+{\beta}A(y)\psi'(x+{\beta}y)+R_y(x,y),
	\end{align}
	and for all $(x,y)\in\I(\mathbb{I}_2)$ we have
	\begin{align}\label{DerI2}
	w_x(x,y)=R_x(x,\bar{y}),\quad
	w_{xx}(x,y)=0,\quad
	w_y(x,y)=c.
	\end{align}
	To evaluate $w_x,w_{xx}$ and $w_y$ inside $\mathbb{I}_1$, we need some more work. 
	We find for all $(x,y)\in\I(\mathbb{I}_1)$
	\begin{align}
	\begin{split}\label{eq1}
	w_x(x,y)=&A(F^{-1}(x))\psi'\big(x+{\beta}F^{-1}(x)\big)+R_x\big(x,F^{-1}(x)\big)\\
	&+(F^{-1})'(x)\bigg[A'\big(F^{-1}(x)\big)\psi\big(x+{\beta}F^{-1}(x)\big)+{\beta}A\big(F^{-1}(x)\big)\psi'\big(x+{\beta}F^{-1}(x)\big)\\
	&+R_y\big(x,F^{-1}(x)\big)-c\bigg],\\
	=&A(F^{-1}(x))\psi'\big(x+{\beta}F^{-1}(x)\big)+R_x\big(x,F^{-1}(x)\big),
	\end{split}\\
	\begin{split}\label{eq3}
	w_{xx}(x,y)=&A(F^{-1}(x))\psi^{\prime\prime}(x+{\beta}F^{-1}(x))+(F^{-1})'(x)\Big[A'(F^{-1}(x))\psi'\big(x+{\beta}F^{-1}(x)\big)\\
	&+{\beta}A(F^{-1}(x))\psi''\big(x+{\beta}F^{-1}(x)\big)+R_{yx}(x,F^{-1}(x))\Big]\\
	=&A(F^{-1}(x))\psi^{\prime\prime}(x+{\beta}F^{-1}(x)),
	\end{split}\\
	w_y(x,y)=&c,
	\end{align}
	where we have used \eqref{cond4} in \eqref{eq1}, and \eqref{cond5} in \eqref{eq3}. Notice that the functions $A,\, F^{-1},\,\psi,$ $\psi',$ $R_y$ and $R_x$ are continuous. {The previous equations and \eqref{cond4} easily provide the continuity of the derivatives on $\mathbb{R}\times\{\bar{y}\}$.} Letting $(x_n,y_n)_{n}\subset\mathbb{I}_1$ be any sequence converging to $(F(y),y)$, $y\in[0,\bar{y})$, we find the required continuity results along $\overline{\mathbb{W}}\cap\overline{\mathbb{I}}_1$ upon employing \eqref{cond4}. Moreover, the boundary condition, cf. \eqref{BoundCond}, ensures the continuity of $w_x$ and $w_{xx}$ along $\overline{\mathbb{I}}_1\cap\overline{\mathbb{I}}_2$, and we clearly have the continuity of $w_y$ along $\overline{\mathbb{I}}_1\cap\overline{\mathbb{I}}_2$. .
\end{proof}

\begin{proposition}\label{HJB}
	The function $w$ from \eqref{CaFnc} is a $C^{2,1}(\mathbb{R}\times[0,\bar{y}])$ solution to \begin{align}
	\max\Big\{\mathcal{L}^yw(x,y)-\rho w(x,y)+xy,w_y(x,y)-c\Big\}=0,\quad\text{for all $(x,y)\in\mathbb{R}\times[0,\bar{y})$,}
	\end{align}
	 such that $w(x,\bar{y})=R(x,\bar{y})$.
	
\end{proposition}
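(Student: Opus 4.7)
The strategy is to verify the HJB variational inequality separately in the three regions $\mathbb{W}$, $\mathbb{I}_1$, $\mathbb{I}_2$ that partition $\mathbb{R}\times[0,\bar{y})$, and to check the boundary condition at $y=\bar{y}$; the $C^{2,1}$-regularity is already supplied by Lemma \ref{Smooth}. The boundary condition follows at once: the explicit expression \eqref{Aytilde}, combined with the definition of $\tilde{x}$ in Lemma \ref{UniqueSol} as the unique root of $H$, yields $A(\bar{y})=0$, so the first branch of \eqref{CaFnc} at $y=\bar{y}$ reduces to $R(x,\bar{y})$, and the third branch does so trivially.

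Inside $\mathbb{W}$, the PDE $\mathcal{L}^yw-\rho w+xy=0$ is automatic because $\psi(\cdot+\beta y)$ solves the homogeneous equation associated with $\mathcal{L}^y-\rho$ and $R$ is a particular solution, cf. \eqref{PS}--\eqref{cond1}. The remaining task is $w_y-c\leq 0$. Setting $\phi:=w_y-c$, the smooth-fit conditions \eqref{cond4}--\eqref{cond5}, which were used to define $A$ and $F$, give $\phi(F(y),y)=\phi_x(F(y),y)=0$. Differentiating the PDE $\mathcal{L}^yw-\rho w+xy=0$ with respect to $y$ yields the one-dimensional (in $x$) identity
\begin{align*}
\mathcal{L}^y\phi-\rho\phi=\kappa\beta w_x+\rho c-x\quad\text{on }\mathbb{W}.
\end{align*}
Since $\psi,\psi'$ decay exponentially and $R_y(x,y)\to-\infty$ as $x\to-\infty$, one has $\phi(x,y)\to-\infty$; the plan is then to combine this asymptotic with the two boundary conditions at $x=F(y)$ and a maximum-principle argument based on the displayed identity to conclude $\phi\leq 0$ on $\mathbb{W}$.

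Inside $\mathbb{I}_1\cup\mathbb{I}_2$, $w_y=c$ is immediate from \eqref{CaFnc}. For the inequality $\mathcal{L}^yw-\rho w+xy\leq 0$, the computations \eqref{eq1}--\eqref{eq3} show that $w_x(x,y)$ and $w_{xx}(x,y)$ in $\mathbb{I}_1$ are independent of $y$ and coincide with their values at the boundary point $(x,F^{-1}(x))$; in $\mathbb{I}_2$ they reduce to $R_x(x,\bar{y})$ and $0$. Writing $\tilde{y}:=F^{-1}(x)$ in $\mathbb{I}_1$ or $\tilde{y}:=\bar{y}$ in $\mathbb{I}_2$, exploiting $w(x,y)=w(x,\tilde{y})-c(\tilde{y}-y)$ together with $\mathcal{L}^{\tilde{y}}w(x,\tilde{y})-\rho w(x,\tilde{y})+x\tilde{y}=0$ (which extends by continuity to the closure of $\mathbb{W}$), a short computation collapses the quantity of interest to
\begin{align*}
\mathcal{L}^yw(x,y)-\rho w(x,y)+xy=(\tilde{y}-y)\bigl[\kappa\beta w_x(x,y)+\rho c-x\bigr].
\end{align*}
In $\mathbb{I}_2$ the bracket equals $\kappa\beta\bar{y}/(\rho+\kappa)+\rho c-x$, which is non-positive because $x\geq\bar{x}=F(\bar{y})\geq\rho c+\kappa\beta\bar{y}/(\rho+\kappa)$ by \eqref{crho}. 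In $\mathbb{I}_1$ the same bracket, viewed as a function of $\tilde{y}\in[0,\bar{y})$ along the free boundary $x=F(\tilde{y})$, has to be shown non-positive; I would do this by inserting the explicit formula \eqref{Aytilde} for $A$ and invoking the positivity $Q_k>0$ from Lemma \ref{Properties} together with \eqref{crho} and the monotonicity of $F$ given by Corollary \ref{Cor:4.5}.

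I expect the main obstacle to be the two sign verifications identified as delicate, namely $\phi\leq 0$ throughout $\mathbb{W}$ and $\kappa\beta w_x+\rho c-x\leq 0$ throughout $\mathbb{I}_1$. These are in fact ``dual'' to each other, as the same expression appears as the source term of the PDE satisfied by $\phi$ in $\mathbb{W}$ and as the bracket governing the sign in $\mathbb{I}_1$, suggesting that once the sign of $\kappa\beta w_x+\rho c-x$ on the free boundary is pinned down via \eqref{Aytilde}, Lemma \ref{PropA}, and Lemma \ref{Properties}, the two inequalities can be treated in tandem.
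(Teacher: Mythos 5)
Your skeleton is the right one and matches the paper's: regularity from Lemma \ref{Smooth}, the boundary condition via $A(\bar y)=0$, the identity $\mathcal{L}^yw-\rho w+xy=(\tilde y-y)\bigl[\kappa\beta w_x+\rho c-x\bigr]$ on the installation region, and the $\mathbb{I}_2$ case via \eqref{crho}. However, the two inequalities you yourself flag as ``the main obstacle'' are the entire substance of the proof, and the methods you sketch for them would not close. In $\mathbb{W}$, a maximum principle for $\phi=w_y-c$ based on $\mathcal{L}^y\phi-\rho\phi=g$ with $g:=\kappa\beta w_x+\rho c-x$ needs $g\geq 0$ on all of $\mathbb{W}$ (at an interior positive maximum one only gets $g<0$ there, which is no contradiction otherwise). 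But $g$ changes sign on $\mathbb{W}$: since $w_x=A\psi'+y/(\rho+\kappa)\geq 0$ one has $g\to+\infty$ as $x\to-\infty$, while at the free boundary $g(F(y),y)<0$ --- this last fact being precisely the $\mathbb{I}_1$ inequality. The paper instead uses a concavity argument: with $S=w_y-c$, smooth fit gives $S(F(y),y)=S_x(F(y),y)=0$, and $S_{xx}(x,y)=\psi''(x+\beta y)\bigl[A'(y)+\beta A(y)\frac{\psi'''}{\psi''}(x+\beta y)\bigr]<0$ for all $x\leq F(y)$, because the bracket is increasing in $x$ (monotonicity of $\psi'''/\psi''$, i.e.\ $Q_2>0$) and is negative at $x=F(y)$, which is equivalent to $D(y,\tilde F(y))>0$ supplied by Lemma \ref{lemma:ybar} and Proposition \ref{Prop:SolODE}. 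Hence $S_x>0$ and $S<0$ on $\mathbb{W}$.

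For $\mathbb{I}_1$, the sign of $Z(x,F^{-1}(x))=c\rho+\kappa\beta w_x(x,F^{-1}(x))-x$ cannot be read off by ``inserting \eqref{Aytilde} and invoking $Q_k>0$ together with \eqref{crho}'': decomposing $\kappa\beta w_x=\kappa\beta A\psi'+\frac{\kappa\beta}{\rho+\kappa}F^{-1}(x)$, the bound \eqref{crho} makes $c\rho+\frac{\kappa\beta}{\rho+\kappa}F^{-1}(x)-x\leq 0$ while the remaining term $\kappa\beta A\psi'>0$ pushes in the opposite direction, so the sign is genuinely undetermined by these facts alone. The paper's argument is a no-crossing argument: $Z(\bar x,F^{-1}(\bar x))<0$ because $A(\bar y)=0$, and at any hypothetical zero $x^\star$ one shows, after substituting the relation that $Z(x^\star,F^{-1}(x^\star))=0$ imposes, that $\frac{d}{dx}Z(x,F^{-1}(x))\big|_{x=x^\star}=\frac{\sigma^2}{2}Q_1^{-1}Q_2>0$, which is incompatible with negativity at the right endpoint; hence $Z<0$ on $[x_0,\bar x)$. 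Your ``duality'' observation is correct --- the same expression $g$ governs both regions --- but pinning down its sign \emph{on the free boundary} is exactly what $\mathbb{I}_1$ requires and does not rescue the maximum principle in $\mathbb{W}$, where $g>0$ far from the boundary. As written, both key inequalities remain unproven.
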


\begin{proof}
	{Lemma \ref{Smooth} guarantees the claimed regularity of $w$. Moreover, from \eqref{CaFnc} we see that $w(x,\bar{y})=R(x,\bar{y})$ since $A(\bar{y})=0$,} and by construction, we clearly have $\mathcal{L}^yw(x,y)-\rho w(x,y)+xy=0$ for all $(x,y)\in\mathbb{W}$, and $w_y(x,y)-c=0$ for all $(x,y)\in\mathbb{I}_1\cup\mathbb{I}_2$. We prove the inequalities $\mathcal{L}^yw(x,y)-\rho w(x,y)+xy\leq 0$ for all $(x,y)\in\mathbb{I}$, and $w_y(x,y)-c\leq 0$ for all $(x,y)\in\mathbb{W}$, in the following three steps separately. {It is worth to bear in mind that $R_x(x,y)=\frac{y}{\rho+\kappa}$ by \eqref{PS}.}
	
	\vspace{0.25cm}
	\emph{Step 1.} Let $(x,y)\in\mathbb{I}_1$ be fixed. From the second line of \eqref{CaFnc}, \eqref{eq1} and \eqref{eq3}, we find
	\begin{align}\label{eq42}
	\begin{split}
	&\mathcal{L}^yw(x,y)-\rho w(x,y)+xy\\
	=&\mathcal{L}^{F^{-1}(x)}w(x,F^{-1}(x))-\rho w(x,F^{-1}(x))+xF^{-1}(x)\\
	&+{\kappa\beta}w_x(x,F^{-1}(x))(F^{-1}(x)-y)+(c\rho-x)(F^{-1}(x)-y)\\
	=&(F^{-1}(x)-y)\left(c\rho+{\kappa\beta}w_x(x,F^{-1}(x))-x\right),
	\end{split}
	\end{align}
	where we have employed that $w(x,F^{-1}(x))$ solves $$\mathcal{L}^{F^{-1}(x)}w(x,F^{-1}(x))-\rho w(x,F^{-1}(x))+xF^{-1}(x)=0.$$ For any $(x,y)\in\mathbb{I}_1$, we have $x\geq F(y)$ implying $F^{-1}(x)\geq y$ because $F$, and hence $F^{-1}$, is strictly increasing, cf. Corollary \ref{Cor:4.5}. Thus, in order to show that \eqref{eq42} is negative on $\mathbb{I}_1$, it suffices to prove that the function 
	\begin{align}\label{FuncZ}
	Z(x,F^{-1}(x)):=c\rho+{\kappa\beta}w_x(x,F^{-1}(x))-x,
	\end{align} is negative for any $x\in[x_0,\bar{x}]$. This can be accomplished in the same way as in \emph{Step 2} in the proof of Proposition \ref{Prop:SolODE}. Due to the regularity of $w$, we can use \eqref{eq1}, and the fact that $A(F^{-1}(\bar{x}))=A(\bar{y})=0$, to obtain 
	\begin{align}\label{eq50}
	Z(\bar{x},F^{-1}(\bar{x}))=c\rho+R_x(\bar{x},\bar{y})-\bar{x}< 0,
	\end{align}
	where the inequality holds by \eqref{crho} with $y=\bar{y}$. Taking the total derivative of $Z(x,F^{-1}(x))$ with respect to $x$ gives
	\begin{align}\label{DerZ}
	\begin{split}
	\frac{dZ(x,F^{-1}(x))}{dx}=&\kappa\beta w_{xx}(x,F^{-1}(x))-1
	=\kappa\beta A(F^{-1}(x))\psi''(x+\beta F^{-1}(x))-1\\
	=&\left[\rho\left(\psi(x+\beta F^{-1}(x))\psi''(x+\beta F^{-1}(x))-\psi'(x+\beta F^{-1}(x))^2\right)\right]^{-1}\\
	&\times\bigg[\rho\left(\psi'(x+\beta F^{-1}(x))^2-\psi(x+\beta F^{-1}(x))\psi''(x+\beta F^{-1}(x))\right)\\
	&-\kappa\psi'(x+\beta F^{-1}(x))\psi''(x+\beta F^{-1}(x))\left(c\rho+\frac{\kappa\beta}{\rho+\kappa}F^{-1}(x)-x\right)\\
	&-\frac{\sigma^2}{2}\kappa\psi''(x+\beta F^{-1}(x))^2R_{xy}(x,F^{-1}(x))\bigg],
	\end{split}
	\end{align}
	where we have employed: $w_{xy}(x,F^{-1}(x))=0$, cf. \eqref{cond3}, for the first equality, and \eqref{Ayneu} for the last equality (after rearranging terms).\\
	Now, suppose that there exists a point $x^\star\in[x_0,\bar{x})$ such that $Z(x^\star,F^{-1}(x^\star))=0$. It follows from \eqref{FuncZ}, together with \eqref{Ayneu} and \eqref{eq1}, that $(x^\star,F^{-1}(x^\star))$ satisfies
	\begin{align}\label{CondNull}
	\begin{split}
	&c\rho+\frac{\kappa\beta}{\rho+\kappa}F^{-1}(x^\star)-x^\star\\=&\frac{-\frac{\sigma^2}{2}\kappa\psi'(x^\star+\beta F^{-1}(x^\star))\psi''(x^\star+\beta F^{-1}(x^\star))R_{xy}(x^\star,F^{-1}(x^\star))}{(\rho+\kappa)\psi'(x^\star+\beta F^{-1}(x^\star))^2-\rho\psi(x^\star+\beta F^{-1}(x^\star))\psi''(x^\star+\beta F^{-1}(x^\star))}.
	\end{split}
	\end{align}
	Then, exploiting the latter, one can find with \eqref{DerZ} that
	\begin{align}\label{eq51}
	\begin{split}
	\frac{dZ(x,F^{-1}(x))}{dx}\bigg|_{x=x^\star}=&\frac{\sigma^2}{2}Q_1(x^\star+\beta F^{-1}(x^\star))^{-1}Q_2(x^\star+\beta F^{-1}(x^\star))
	>0,
	\end{split}
	\end{align}
	after using \eqref{Property1} with $k=0,1,2$, and some simple algebra. We conclude from both \eqref{eq50} and \eqref{eq51} that there cannot exist a point $x^\star\in[x_0,\bar{x})$ such that $Z(x^\star,F^{-1}(x^\star))=0$. Therefore, we have $\mathcal{L}^yw(x,y)-\rho w(x,y)+xy\leq 0$ for all $(x,y)\in\mathbb{I}_1$.

	\vspace{0.25cm}
	\emph{Step 2.} For all $(x,y)\in\mathbb{I}_2$ we find from the third line of \eqref{CaFnc} and \eqref{DerI2}
	\begin{align*}
	&\mathcal{L}^yw(x,y)-\rho w(x,y)+xy\\
	&=\mathcal{L}^{\bar{y}}R(x,\bar{y})-\rho R(x,\bar{y})+x\bar{y}+{\kappa\beta}R_x(x,\bar{y})(\bar{y}-y)+(c\rho-x)(\bar{y}-y)\\
	&=(\bar{y}-y)\left(\frac{\kappa\beta}{\rho+\kappa}\bar{y}+c\rho-x\right)\leq (\bar{y}-y)\left(\frac{\kappa\beta}{\rho+\kappa}\bar{y}+c\rho-\bar{x}\right)\leq 0,
	\end{align*}
	where we have used that $R(x,\bar{y})$ solves $\mathcal{L}^{\bar{y}}R(x,\bar{y})-\rho R(x,\bar{y})+x\bar{y}=0$ for the second equality, $x\geq\bar{x}$ for any $(x,y)\in\mathbb{I}_2$ for the first inequality, and \eqref{crho} with $y=\bar{y}$ and $F(\bar{y})=\bar{x}$ for the last inequality.
	
	\vspace{0.25cm}
	\emph{Step 3.} Let $(x,y)\in\mathbb{W}$ be fixed. We define $$S(x,y):=w_y(x,y)-c=A'(y)\psi(x+{\beta}y)+{\beta}A(y)\psi'(x+{\beta}y)+R_y(x,y)-c,$$
	where the last equality holds true by \eqref{DerWait3}. From \eqref{cond4} we clearly have $S(F(y),y)=0$. Hence, it suffices to show that $S_x(x,y)\geq 0$ because $x<F(y)$ for all $(x,y)\in\mathbb{W}$. Computing the derivative of $S$ with respect to $x$ gives
	$$S_x(x,y)=A'(y)\psi'(x+{\beta}y)+{\beta}A(y)\psi''(x+{\beta}y)+R_{xy}(x,y),$$
	and from \eqref{cond5} we observe that $S_x(F(y),y)=0$. Moreover, we have $$S_{xx}(x,y)=A'(y)\psi''(x+{\beta}y)+{\beta}A(y)\psi'''(x+{\beta}y).$$
	Recall \eqref{Substitution} and \eqref{DenOld}. Lemma \ref{lemma:ybar} and Proposition \ref{Prop:SolODE} imply that
	\begin{align}\label{Dpos}
	D(y,F(y)+\beta y)>0,\quad\text{for all $y\in[0,\bar{y}]$}.
	\end{align} Now, exploiting \eqref{Aytilde} and \eqref{Aprimey}, we find 
	\begin{align}\label{eq52}
	\begin{split}
	&S_{xx}(F(y),y)\\
	&=-\left[(\rho+\kappa)\psi({F}(y)+\beta y)Q_0({F}(y)+\beta y)\right]^{-1}{D}(y,{F}(y)+\beta y)<0,\quad\text{for all }y\in[0,\bar{y}],
	\end{split}
	\end{align} 
	where the inequality is due to \eqref{Dpos} and the fact that $Q_0$ is (strictly) positive. 
	Since $\frac{\psi'''(\cdot)}{\psi''(\cdot)}$ is increasing by Lemma \ref{Properties}-(3), and $A(y)$ is positive for all $y\in[0,\bar{y}]$ by Lemma \ref{PropA}, we have for all $x\leq F(y)$
	$$A'(y)+\frac{\psi'''(x+{\beta}y)}{\psi''(x+{\beta}y)}\beta A(y)<A'(y)+\frac{\psi'''(F(y)+{\beta}y)}{\psi''(F(y)+{\beta}y)}\beta A(y)<0,$$ where we have employed \eqref{eq52} for the last inequality. Thus, we have $S_{xx}(x,y)<0$, and therefore $S_x(x,y)>0$ for all $(x,y)\in\mathbb{W}$. This completes the proof.
\end{proof}

We conclude that $w$ identifies with the value function.
\begin{theorem} \label{Th:4.8}
	Recall $w$ from \eqref{CaFnc} and let $\Delta:=(\bar{y}-y)\mathds{1}_{\{x\geq \bar{x}\}}+(F^{-1}(x)-y)\mathds{1}_{\{\bar{x}>x>F(y)\}},$ $\tau:=\inf\{t\geq 0:K_t=\bar{y}-(y+\Delta)\}$, and $(X,K)$ defined on $[0,\tau]$ such that
	\begin{align}\label{SDE2}
	\begin{split}
	X_t&\leq F(y+\Delta+K_t),\\
	dX_t&=\kappa\Big(\left(\mu-{\beta}(y+\Delta+K_t)\right)-X_t\Big)dt+\sigma dW_t,\\
	dK_t&=\mathds{1}_{\{X_t=F(y+\Delta+K_t)\}}dK_t,
	\end{split}
	\end{align}
	with increasing $K$, and starting point $(X_0,K_0)=(x,0)$. Then, the function $w$ identifies with the value function $V$ from \eqref{ValueFnc}, and the optimal installation strategy, denoted by $I^\star$, is given by
	\begin{align}\label{Istar}
	\begin{cases}
	I^\star_{0-}=0\\
	I^\star_t=
	\begin{cases}
	\Delta+K_t,\quad t\in[0,\tau),\\
	\Delta+K_\tau,\quad t\geq\tau.
	\end{cases}
	\end{cases}
	\end{align} 
\end{theorem}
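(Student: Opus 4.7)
The plan is to verify that the candidate function $w$ defined in \eqref{CaFnc} and the strategy $I^\star$ in \eqref{Istar} together satisfy the hypotheses of the Verification Theorem (Theorem \ref{VerificationTheorem}). By Proposition \ref{HJB} we already know that $w$ belongs to $C^{2,1}(\mathbb{R}\times[0,\bar y])$ and solves the HJB equation \eqref{HJB1} with the correct boundary condition at $y=\bar y$, so only the growth estimate \eqref{eq15} and the pointwise requirements \eqref{eq2VT}--\eqref{eq3VT} for $I^\star$ remain to be checked.

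For the growth condition I would argue piecewise on the three regions appearing in \eqref{CaFnc}. On $\mathbb{I}_2\cup([\bar x,\infty)\times\{\bar y\})$ the function $w$ equals $R(x,\bar y)-c(\bar y-y)$, which is affine in $x$. On $\mathbb{I}_1$ the variable $x$ is bounded between $F(0)$ and $\bar x$, so $w$ is uniformly bounded in $x$ and continuous in $y$. On $\mathbb{W}$ I would use that $A(y)$ is positive and continuous (hence bounded on $[0,\bar y]$ by Lemma \ref{PropA}), that $R(x,y)$ is affine in $x$, and that $\psi(x+\beta y)$ decays at $-\infty$ while on $\mathbb{W}$ one has $x+\beta y\le \tilde F(y)\le \tilde x$, so that $A(y)\psi(x+\beta y)$ is uniformly bounded. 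Combining these estimates yields \eqref{eq15} with a suitable constant $K$.

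The main obstacle is the construction of an admissible control $I^\star$ realising the reflection \eqref{SDE2}. Following \cite{Chiarolla2}, I would first produce the reflected pair $(X,K)$ on $[0,\tau]$ via a Girsanov change of measure: introduce an auxiliary process with drift freed from the $-\kappa\beta K_t$ term, solve the associated one-sided Skorokhod problem with moving boundary $t\mapsto F(y+\Delta+K_t)$, and then revert to the original probability measure. The free boundary $F$ is globally Lipschitz on $[0,\bar y]$: by Proposition \ref{Prop:SolODE} and Corollary \ref{Cor:4.5} one has $F'(y)=\tilde F'(y)-\beta$, with $\tilde F'=\mathcal{G}(\cdot,\tilde F(\cdot))$ continuous on the compact interval $[0,\bar y]$ (where $D>0$ by \eqref{Dpos}), hence bounded. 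Pathwise uniqueness of $(X,K)$ then follows by a standard contraction argument for Skorokhod reflection at Lipschitz boundaries. Admissibility of $I^\star$ is immediate from its definition: it is $\mathbb{F}$-adapted, nondecreasing, càdlàg, with $I^\star_{0-}=0$ and $I^\star_t\le \Delta+K_\tau\le \bar y-y$ by the very choice of $\tau$.

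To conclude I would verify the two pointwise conditions of Theorem \ref{VerificationTheorem}. The initial lump $\Delta$ transports the state either onto the graph $\{x=F(y)\}$ (when $F(y)\le x<\bar x$) or up to $y=\bar y$ (when $x\ge\bar x$), both of which lie in $\overline{\mathbb{W}}$; thereafter the first and third line of \eqref{SDE2} keep $X_t\le F(y+\Delta+K_t)=F(Y^{y,I^\star}_t)$, so $(X^{x,y,I^\star}_t,Y^{y,I^\star}_t)\in\overline{\mathbb{W}}$ for every $t\ge 0$, proving \eqref{eq2VT}. The minimal-effort condition \eqref{eq3VT} holds because the initial jump $\Delta$ is performed only when the initial point lies in $\mathbb{I}$, and by construction $dK_t=\mathds{1}_{\{X_t=F(Y^{y,I^\star}_t)\}}\,dK_t$, with the set $\{X_t=F(Y^{y,I^\star}_t)\}$ contained in $\mathbb{I}$. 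An application of Theorem \ref{VerificationTheorem} then yields $V(x,y)=w(x,y)$ on $\mathbb{R}\times[0,\bar y]$ and the optimality of $I^\star$.
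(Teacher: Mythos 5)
Your proposal follows essentially the same route as the paper: invoke the Verification Theorem, check the linear growth of $w$ region by region, construct the reflected pair $(X,K)$ via the Girsanov-type argument of \cite{Chiarolla2}, establish pathwise uniqueness through a Lipschitz/Gr\"onwall contraction, and then verify \eqref{eq2VT}--\eqref{eq3VT}. Your growth estimate on $\mathbb{W}$ (using $x+\beta y\le\tilde F(y)\le\tilde x$ and the monotonicity of $\psi$) and the verification of the two pointwise conditions are correct and match the paper's reasoning.

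One imprecision in the uniqueness step: you justify that $F$ is globally Lipschitz (i.e.\ $F'=\tilde F'-\beta$ is bounded \emph{above}), but the contraction estimate actually requires the Lipschitz continuity of $\bar F^{-1}$, since $K$ is built as a running supremum of $\bar F^{-1}(X_s)$ and one must bound $|F^{-1}(\tilde X_s)-F^{-1}(\hat X_s)|$ by a constant times $|\tilde X_s-\hat X_s|$. This needs $F'$ bounded \emph{away from zero}, equivalently $(F^{-1})'$ bounded above, which the paper obtains from the strict positivity and continuity of $N-D$ on the compact set (Lemmas \ref{lemma:ybar} and \ref{Implications} together with Proposition \ref{Prop:SolODE}), yielding
$0<(F^{-1})'(x)\le\max_{x_0\le x'\le\bar x}\beta^{-1}D(F^{-1}(x'),x')/\bigl(N(F^{-1}(x'),x')-D(F^{-1}(x'),x')\bigr)<\infty$. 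The ingredients for this are all available to you, so the slip is repairable, but as written your Lipschitz claim points in the wrong direction.
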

\
\begin{proof}
	To prove the claim, we aim at applying Theorem \ref{VerificationTheorem}. We already know that $w\in C^{2,1}(\mathbb{R}\times[0,\bar{y}])$ is a solution to the HJB equation \eqref{HJB1} by  Proposition \ref{HJB}. Moreover, the function $w$ satisfies the growth condition in \eqref{eq15} upon exploiting the facts that $A$ is continuous, $\psi$ is continuous and increasing, and $|R(x,y)|\leq K\big(1+|x|\big)$ for any $y\in[0,\bar{y}]$ and some constant $K>0$.

In a next step, we show the existence of $(X,K)$ satisfying the stochastic differential equation \eqref{SDE2}. To do so, we borrow ideas from \cite{Chiarolla2}, cf. Section 5 therein. We let $\mathbb{Q}$ be a probability measure on a filtered probability space $(\Omega, \tilde{\mathcal{F}},(\tilde{\mathcal{F}}_t)_{t\geq 0})$ with a filtration $(\tilde{\mathcal{F}}_t)_{t\geq 0}$ satisfying the usual conditions, and $B$ be a $(\tilde{\mathcal{F}}_t)_{t\geq 0}$-Brownian motion under $\mathbb{Q}$. Define the processes $({X},{K})$ such that 
	\begin{align}
	d{X}_t&=\kappa\Big(\left(\mu-{\beta}(y+\Delta)\right)-{X}_t\Big)dt+\sigma dB_t,\\
	{K}_t&=\min\Big\{\sup\limits_{0\leq s\leq t}\{\bar{F}^{-1}({X}_s)\},\bar{y}-(y+\Delta)\Big\},
	\end{align} with starting point $({X}_0,{K}_0)=(x,0)$, and where $\bar{F}^{-1}$ is such that 
	\begin{align}
	\bar{F}^{-1}(x):=
	\begin{cases}
	0,\quad&\text{if }x<x_0,\\
	F^{-1}(x),\quad&\text{if }x\in[x_0,\bar{x}],\\
	\bar{y},\quad&\text{if }x>\bar{x}.\\
	\end{cases}
	\end{align} Notice that the pair $({X},{K})$ satisfies
	\begin{align*}
	\begin{split}
	{X}_t&\leq F(y+\Delta+{K}_t),\\
	d{K}_t&=\mathds{1}_{\{{X}_t=F(y+\Delta+{K}_t)\}}d{K}_t,
	\end{split}
	\end{align*} 
	for any $t\leq\tau$. Since ${K}$ is increasing and $K_t\leq\bar{y}-(y+\Delta)$ for any $t\leq\tau$, we apply Girsanov's Theorem (cf. Section 3.5 in \cite{karatzas}), so to obtain an equivalent probability measure $\mathbb{P}$ with respect to $\mathbb{Q}$ such that for any $T>0$ $$\frac{d\mathbb{P}}{d\mathbb{Q}}\bigg|_{\mathcal{F^B}_T}=\exp\left(-\int_0^T\frac{\kappa\beta}{\sigma} {K}_sdB_s-\frac{1}{2}\int_0^T\left(\frac{\kappa\beta}{\sigma} {K}_s\right)^2ds\right),$$ and $$W_t=B_t+\int_{0}^{t}\frac{\kappa\beta}{\sigma}{K}_sds,$$ is a standard Brownian motion on $(\Omega, \mathcal{F^B},(\mathcal{F^B}_t)_{t\geq 0},\mathbb{P})$, where $(\mathcal{F^B}_t)_{t\geq 0}$ is the $\sigma$-algebra generated by $B$, and $\mathcal{F^B}=\mathcal{F^B}_\infty$. The pair $(X,K)$ constructed in this way is a weak solution to \eqref{SDE2}. We will prove in the following that $(X,K)$ is pathwise unique, hence a strong solution. Recall \eqref{ODE} and \eqref{ODEalt}. Corollary \eqref{Cor:4.5} implies 
	$$0<\left(F^{-1}\right)'(x)\leq\max\limits_{x_0\leq x'\leq \bar{x}}\beta^{-1}\frac{D(F^{-1}(x'),x')}{N(F^{-1}(x'),x')-D(F^{-1}(x'),x')}<\infty,\quad\text{for all $x\in[x_0,\bar{x}]$},$$ because of the continuity of the functions $N$ and $D$, and the fact that $$N(F^{-1}(x),x)-D(F^{-1}(x),x) > 0,\quad \text{for any $x\in[x_0,\bar{x}]$},$$ which is due to Lemma \ref{lemma:ybar}, Proposition \ref{Prop:SolODE} and Lemma \ref{Implications}. Therefore, $\bar{F}^{-1}$ is (globally) Lipschitz continuous. 
	Now, fix $\omega\in\Omega$, and let $(\tilde{X},\tilde{K})$ and $(\hat{X},\hat{K})$ be two solutions of \eqref{SDE2}. The (global) Lipschitz continuity of $\bar{F}^{-1}$ and the second line of \eqref{SDE2} imply
	\begin{align}\label{EstimateK}
	\begin{split}
	\left|\tilde{K}_t-\hat{K}_t\right|&=\left|\sup\limits_{0\leq s \leq t}\big\{F^{-1}(\tilde{X}_s)-(\bar{y}-(y+\Delta))\big\}^+-\sup\limits_{0\leq s \leq t}\big\{F^{-1}(\hat{X}_s)-(\bar{y}-(y+\Delta))\big\}^+\right|\\
	&\leq\sup\limits_{0\leq s \leq t}\Big\{\left|F^{-1}(\tilde{X}_s)-F^{-1}(\hat{X}_s)\right|\Big\}\\
	&\leq \sup\limits_{0\leq s \leq t}\bar{K}\left|\tilde{X}_s-\hat{X}_s\right|\leq C_0\int_{0}^{t}\left|\tilde{X}_s-\hat{X}_s\right|+\left|\tilde{K}_s-\hat{K}_s\right|ds,
	\end{split}
	\end{align}
	for some constant $C_0>0$. Then, again with the second line of \eqref{SDE2} and \eqref{EstimateK}, we find for some constant $C_1>0$ the estimate
	\begin{align}\label{eq120}
	0\leq\left\Vert(\tilde{X}_t-\hat{X}_t,\tilde{K}_t-\hat{K}_t)\right\Vert\leq C_1\int_{0}^{t}\left|\tilde{X}_s-\hat{X}_s\right|+\left|\tilde{K}_s-\hat{K}_s\right|ds,
	\end{align}
	where $||\cdot||$ denotes the euclidean norm in $\mathbb{R}^2$. Now, Gr\"onwall's inequality yields
	\begin{align}\label{eq121}
	\left\Vert(\tilde{X}_t-\hat{X}_t,\tilde{K}_t-\hat{K}_t)\right\Vert\leq 0,
	\end{align} upon recalling that $t\mapsto X_t$ is continuous for any solution of \eqref{SDE2}. Thus, by \eqref{eq121}, pathwise uniqueness holds, and \eqref{SDE2} admits a unique strong solution.
	
	Finally, since $I^\star$ from \eqref{Istar} satisfies \eqref{eq2VT} and \eqref{eq3VT}, we conclude that $w$ identifies with $V$, and $I^\star$ is an optimal installation strategy by Theorem \ref{VerificationTheorem}.
\end{proof}
\section{Numerical Implementation}
\label{sec:NumImp}
\begin{figure}
	\subfigure[\label{Fig1a}The functions $F^{-1}$ and $M^{-1}$ with $\mu=0.2$.]{\includegraphics[width=0.49\textwidth]{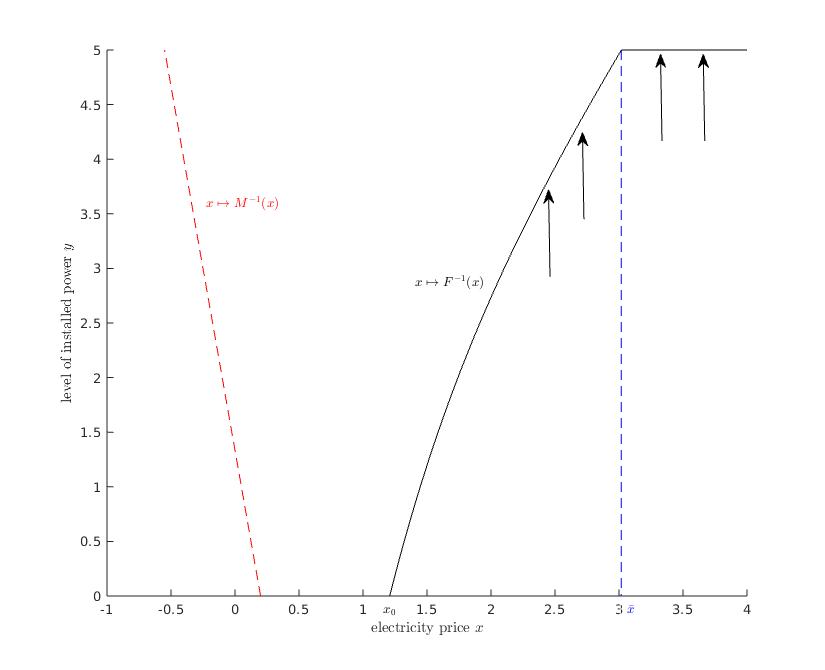}}
	\subfigure[\label{Fig1b}The functions $F^{-1}$ and $M^{-1}$ with $\mu=1.4$.]{\includegraphics[width=0.49\textwidth]{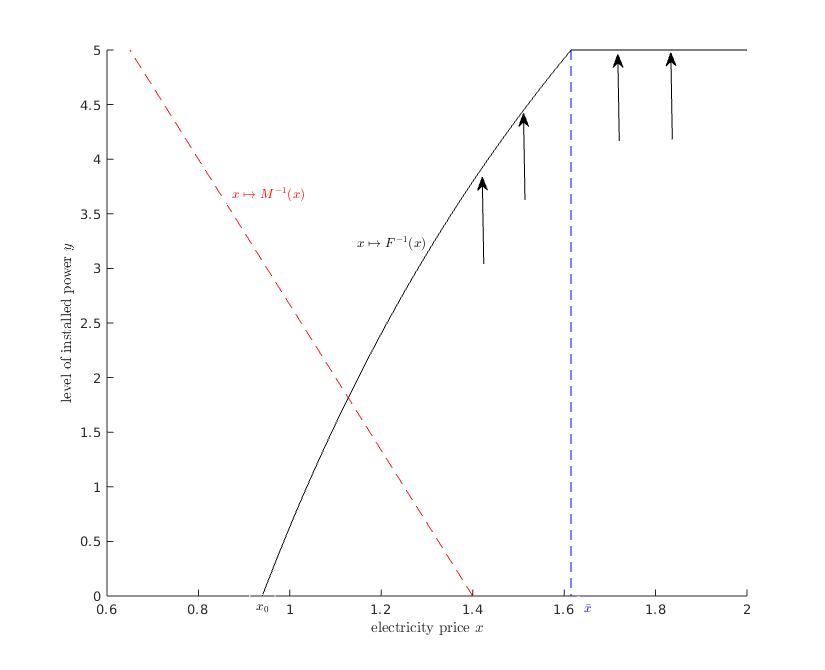}}
	\subfigure[\label{Fig1c}The functions $F^{-1}$ and $M^{-1}$ with $\mu=2.25$.]{\includegraphics[width=0.49\textwidth]{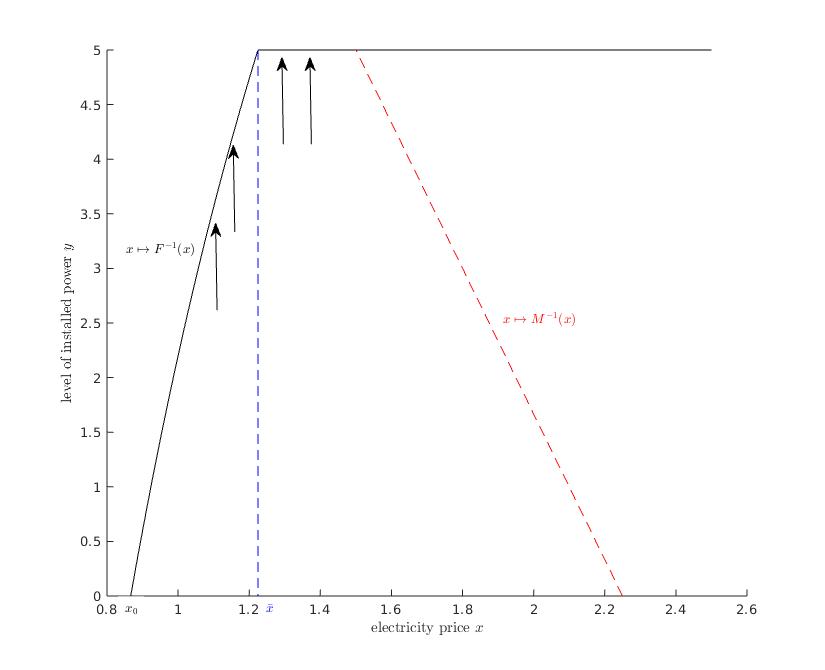}}
	\caption{Plots of the functions $x\mapsto F^{-1}(x)$ and $x\mapsto M^{-1}(x)$ with various values for $\mu$. The optimal installation strategy prescribes the following. In the region $\{(x,y) \in \mathbb{R} \times [0,\bar{y}):\, x < F(y)\}$ it is optimal not to install additional solar panels. Conversely if, at the initial time, $(x,y)$ is such that $x \geq F(y)$ and $y\in[0,\bar{y})$, then the (optimally controlled) process $(X,Y)$ should be pushed in direction $(0,1)$ as follows: for $x \geq \bar{x}$, the firm should immediately install the maximum number of panels, so to increase the level of installed power by $\bar{y}-y$ units. For $(x,y)$ such that $x\in [F(y),\bar{x})$, the firm should make an initial lump sum installation of size $F^{-1}(x)-y$, and then keep on making infinitesimal installations just preventing the price to exceed $F(y)$, until the maximum quantity of panels $\bar y$ is installed.}\label{Fig1} 
\end{figure}


The ordinary differential equation \eqref{ODE} cannot be solved analytically, but we are able to solve it numerically with {\sf MATLAB}. Figure \ref{Fig1} displays a plot of the inverse of the free boundary $F$ with three different values for the drift coefficient $\mu$. In particular we take those parameters' values as given in Table \ref{table2}, and $\mu\in\{0.2;1.4,2.25\}$.
\begin{table}[h]
	\centering
	\begin{tabular}   {| l | l | l | l | l | l |}
		\hline
		$\kappa$ & $\sigma$ & $\rho$ & $c$ & $\beta$ & $\bar{y}$ \\ \hline
		0.10 & 0.50 & 0.05 & 0.30 & 0.15 & 5  \\
		\hline
	\end{tabular}
	\caption{Parameters' values.}
	\label{table2}
\end{table}


The dashed sloped red line is a plot of the inverse of the function $M:[0,\infty)\mapsto(-\infty,\mu]$ given by $M(y):=\mu-\beta y$ (to which we shall refer as ``line of means''). The function $M$ provides the underlying mean-reversion level of the process $X^{x,y}$ depending on the level of installed power $y$. Figures \ref{Fig1a}, \ref{Fig1b} and \ref{Fig1c} show three different scenarios. The red line can {lie} entirely to the left or to the right of $F^{-1}$  (see Figure \ref{Fig1a} and Figure \ref{Fig1c}), or it can intersect $F^{-1}$ (see Figure \ref{Fig1b}). Notice that the position of the current mean reversion level in fact influences the expected time of the next action: if the red line is entirely to the left of $F^{-1}$ (i.e. the current mean reversion level is below $F(y)$ for any $y\in[0,\bar{y}]$), then the electricity price tends to move towards the line of means and therefore to stay below the firm's threshold, at which it starts to undertake the installation of additional solar panels. Conversely, the electricity price tends to move above the firm's threshold $F(y)$ for some $y\in[0,\bar{y}]$, if the red line intersects or lies in the installation region $\mathbb{I}$. Such a case in turn implies that the firm will increase its level of installed power faster.  In the limiting situation, that is when the red line is entirely on the right of $F^{-1}$, i.e. when the line of means lies entirely in $\mathbb{I}_2$, there is a
very high probability that $X$, if not already there and left uncontrolled, will enter into $\mathbb{I}_2$ in a very short time, so that in either case 
the firm would quickly install the maximum possible capacity $\bar{y}$.

The next proposition gives a characterization of when and how the line of means intersects the installation region $\mathbb{I}$ either at the free boundary or at its upper bound, i.e. at $\{(x,y)| y = \bar y, x \geq \bar x\}$. 

\begin{proposition}
Given the upper bound $\bar y$ for the solar panel cumulative level, and the corresponding free boundary $F(y)$ starting from $(\bar x,\bar y)$, the line of means $x = \mu - \beta y$:
\begin{enumerate}
\item has no intersection with the installation region $\mathbb{I}$ if $F(0) > \mu$;
\item intersects the boundary of $\mathbb{I}$ in the free boundary $F(y)$ if $F(0) \leq \mu$ and $\bar y \geq y^*$, where 
\begin{equation} \label{y^*}
y^* := \frac{1}{\beta (\rho + 2 \kappa)} \left( (\mu - \rho c)(\rho + \kappa) - \rho \frac{\psi(\mu)}{\psi'(\mu)} \right)
\end{equation}
\item intersects the boundary of $\mathbb{I}$ in its upper bound $y = \bar y$ if $\bar y \leq y^*$.
\end{enumerate}
\end{proposition}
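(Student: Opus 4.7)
I would reduce the statement to two monotonicity facts: one about how the free boundary $F$ sits relative to the line of means $x=\mu-\beta y$ for a given upper bound $\bar y$, and one about how the endpoint $\tilde x=\tilde F(\bar y)$ depends on $\bar y$ itself.

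First I would introduce
\[
g(y):=\tilde F(y)-\mu = F(y)+\beta y - \mu,
\]
which is strictly increasing on $[0,\bar y]$ because $\tilde F'(y)\geq \beta>0$ by Proposition~\ref{Prop:SolODE}. The line-of-means point $(\mu-\beta y,y)$ lies in $\mathbb{I}$ iff $\mu-\beta y \geq F(y)$ iff $g(y)\leq 0$. Case~(1) is then immediate: if $F(0)>\mu$ then $g(0)>0$, so by monotonicity $g(y)>0$ on $[0,\bar y]$ and the line never meets $\mathbb I$. For cases~(2) and~(3) I would start from $F(0)\leq \mu$ (this will be automatic under $\bar y\leq y^*$, since, as shown below, $\bar y\leq y^*$ forces $\tilde x\leq \mu$ and hence $F(0)\leq F(\bar y)=\bar x=\tilde x-\beta\bar y\leq \mu$). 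Under this condition the monotonicity of $g$ shows that the exit of the line from $\mathbb I$ is governed by the sign of $g(\bar y)=\tilde x-\mu$: if $\tilde x\geq \mu$, the unique zero $y_0\in[0,\bar y]$ of $g$ provides the intersection with the free boundary; if $\tilde x\leq \mu$, the line stays in $\mathbb I$ up to $y=\bar y$ and exits through the upper edge at $(\mu-\beta\bar y,\bar y)$, with $\mu-\beta\bar y\geq \bar x$.

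It remains to translate $\tilde x\gtreqless \mu$ into $\bar y\gtreqless y^*$. For this I would apply the implicit function theorem to the defining equation $H(\tilde x;\bar y)=0$ from Lemma~\ref{UniqueSol}. A direct computation yields
\[
H_{\bar y}(x;\bar y)=\frac{\beta(\rho+2\kappa)}{\rho(\rho+\kappa)}\,\psi'(x)>0, \qquad H_x(x;\bar y)=\psi''(x)\bigl(c-\tilde R(x,\bar y)\bigr),
\]
and rearranging $H(\tilde x;\bar y)=0$ gives $c-\tilde R(\tilde x,\bar y)=-\psi(\tilde x)/\bigl((\rho+\kappa)\psi'(\tilde x)\bigr)<0$. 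The crux will be to argue that $\psi''(\tilde x)>0$: this follows from $Q_0=\psi\psi''-(\psi')^2>0$ (Lemma~\ref{Properties}-(3)) combined with $\psi>0$, which forces $\psi''>0$ everywhere. Hence $H_x(\tilde x;\bar y)<0$ and $\tilde x'(\bar y)=-H_{\bar y}/H_x>0$, so $\bar y\mapsto \tilde x(\bar y)$ is strictly increasing. Finally, substituting $x=\mu$ into $H(x;y^*)=0$ reproduces exactly the expression in~\eqref{y^*}, showing $\tilde x(y^*)=\mu$. Strict monotonicity together with this identity gives $\tilde x\geq \mu\iff \bar y\geq y^*$, closing cases~(2) and~(3). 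The only genuinely nontrivial ingredient is the positivity of $\psi''$; all remaining steps are geometric or algebraic manipulations on already established objects.
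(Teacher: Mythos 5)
Your proposal is correct and follows essentially the same route as the paper: reduce everything to the sign of $\tilde x(\bar y)-\mu$ using the monotonicity $\tilde F'\geq\beta$, prove $\bar y\mapsto\tilde x(\bar y)$ is strictly increasing via the implicit function theorem applied to $H$ (your cancellation in $H_x$ and the signs $c-\tilde R(\tilde x,\bar y)<0$, $\psi''>0$ are exactly the paper's computation), and characterize $y^*$ by $\tilde x(y^*)=\mu$. The positivity of $\psi''$ you flag as the crux is already available from Lemma \ref{Properties}-(2) (strict convexity of $\psi$), though your derivation from $Q_0>0$ and $\psi>0$ works equally well.
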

\begin{proof}
For case (1), since the line of means $x = \mu - \beta y$ is decreasing in $y$ and the free boundary $F$ is increasing, there is no intersection if $\mu - \beta \times 0 = \mu < F(0)$. 

Let us now assume that $\mu \geq F(0)$ and discriminate between cases (2) and (3). The line of means $x = \mu - \beta y$ and the free boundary $x = F(y)$ have one or zero intersection according to whether $\bar x = F(\bar y)> \mu - \beta \bar y$ or not, respectively, i.e. whether 
$F(\bar y)+\beta \bar y =\tilde{x}(\bar{y})> \mu$, where we have written $\tilde{x}(\bar{y})$ in order to stress the dependency of $\tilde{x}$ on $\bar{y}$. Employing Lemma \ref{UniqueSol} and the implicit function theorem, we get 
$$\tilde{x}'(\bar{y})=\frac{\psi'(\tilde{x})\tilde{R}_y(\tilde{x},\bar{y})}{\psi''(\tilde{x})\left(c-\tilde{R}(\tilde{x},\bar{y})\right)}=-\frac{\beta(\rho+2\kappa)\psi'(\tilde{x})}{\rho(\rho+\kappa)\psi''(\tilde{x})\left(c-\tilde{R}(\tilde{x},\bar{y})\right)}>0,$$ 
where the strict inequality holds as $c-\tilde{R}(\tilde{x},\bar{y})<0$ by Lemma \ref{PropA} together with \eqref{Aytilde}.
Therefore, $F(\bar y)+\beta \bar y$ is increasing in $\bar y$, which means that there will exist a point $y^*$ such that there is an intersection for $\bar y > y^*$, and there is no intersection for $\bar y < y^*$. The point $y^*$ is characterized by the fact that the line of means $x = \mu - \beta y$ intersects both the free boundary $x = F(y)$ and the upper bound of the domain $y = \bar y$ in the same point $(\bar x, y^*)$. By Lemma \ref{UniqueSol} and its conclusion, the point $\bar x$ is characterized as $\bar x = \tilde x - \beta \bar y$, where $\tilde x$ is the solution of $H(\tilde x) = 0$, with $H$ defined in Equation \eqref{hxinf}. Thus, to find $y^*$ we must impose simultaneously
$$ \left\{ \begin{array}{l}
\bar x = \mu - \beta y^*, \\
\bar x = \tilde x - \beta y^*, \\
\psi'(\tilde x) (c - \tilde R(\tilde x, y^*)) + (\rho + \kappa)^{-1} \psi(\tilde x) = 0\\
\end{array} \right. $$
Thus, in this case $\tilde x = \mu$, and the third equation can be rewritten as
$$ \psi'(\mu) \left(c - \frac{\mu \kappa + \rho \mu - \beta (\rho + 2 \kappa) y^*}{\rho (\rho + \kappa)} \right) + (\rho + \kappa)^{-1} \psi(\mu) = 0 $$
This is a first-order algebraic equation for $y^*$, and the solution is easily obtained as in Equation \eqref{y^*}. 
\end{proof}

\begin{remark}
Checking where the upper bound $\bar y$ falls among the three cases above gives immediately a qualitative information on how the installation will proceed. In fact, in Case (1) there is always a very low probability for the state $(X_t,Y_t)$ to enter into the installation region $\mathbb{I}$, thus the installation will arrive to the upper bound $\bar y$ in a very long time. Conversely, in Case (3) there is a high probability that the initial state $(x,y)$ belongs to $\mathbb{I}_2$, and in this case at time 0 there will be a lump installation $\Delta$ as in Theorem \ref{Th:4.8} making that immediately $Y_0 = \bar y$. If this is not the case, there is still a very high probability for the state $(X_t,Y_t)$ to enter into the installation region $\mathbb{I}$, and any installation strategy will not be able to change this situation: in this case, as already said, the optimal strategy will keep on making infinitesimal installations just to prevent the price to exceed F(y), until the maximum quantity of panels $\bar y$ is installed. This, with a quite high probability, will happen in a short time. Case (2) lies obviously in between, and the fact that the initial installation $\Delta$ is sufficient to saturate the total capacity $\bar y$, or is even at all present, depends on the initial price level $X_0 = x$. 
\end{remark}

\begin{remark}
Unfortunately, to discriminate between Case (1) and the other two, one has to solve numerically Equation \eqref{ODE} in order to check whether $F(0) > \mu$ or not. Instead, discriminating between Cases (2) and (3) is much easier, as the point $y^*$ in Equation \eqref{y^*} is given explicitly in terms of initial parameters and known functions. 
\end{remark}

\subsection{Comparative Statics}\label{sec:CS}
\begin{figure}
	\subfigure[\label{Fig2a}The function $F^{-1}$ with $\sigma=0.5$ (black), $\sigma=0.6$ (red), $\sigma=0.7$ (blue), $\sigma=0.8$ (cyan).]{\includegraphics[width=0.48\textwidth]{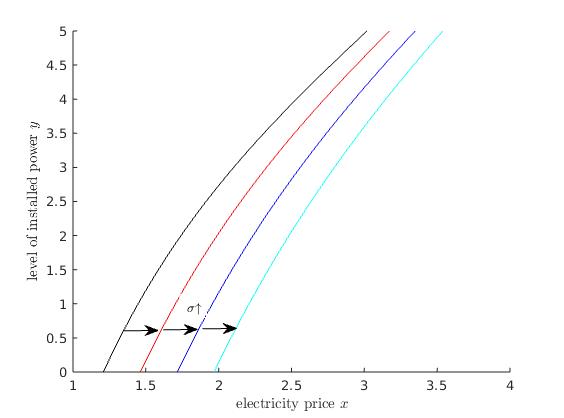}}\,\,
	\subfigure[\label{Fig2b}The function $F^{-1}$ with $\mu=0.2$ (black), $\mu=0.3$ (red), $\mu=0.4$ (blue), $\mu=0.5$ (cyan).]
	{\includegraphics[width=0.48\textwidth]{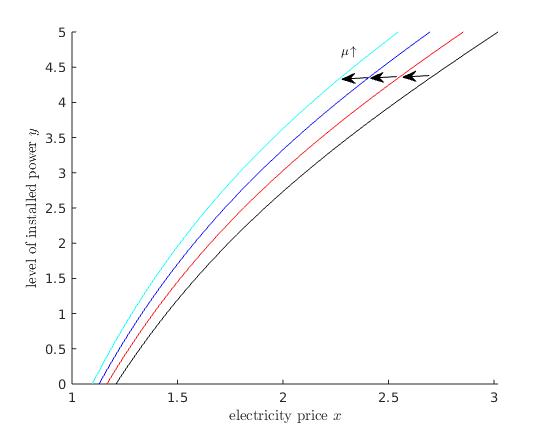}}
	\subfigure[\label{Fig2e}The function $F^{-1}$ with $\beta=0.15$ (black), $\beta=0.175$ (red), $\beta=0.2$ (blue), $\beta=0.225$ (cyan).]{\includegraphics[width=0.48\textwidth]{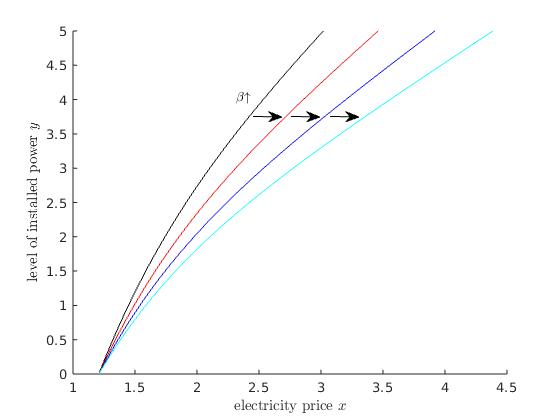}}
	\subfigure[\label{Fig2c}The function $F^{-1}$ with $\kappa=0.1$ (black), $\kappa=0.15$ (red), $\kappa=0.20$ (blue), $\kappa=0.25$ (cyan).]{\includegraphics[width=0.48\textwidth]{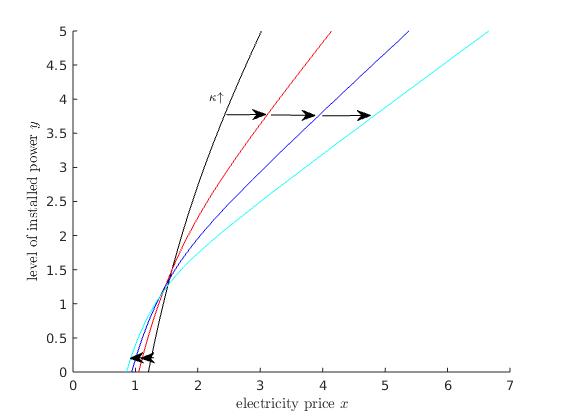}}
	\subfigure[\label{Fig2d}The function $F^{-1}$ with $c=0.3$ (black), $c=0.8$ (red), $c=1.3$ (blue), $c=1.8$ (cyan).]
	{\includegraphics[width=0.48\textwidth]{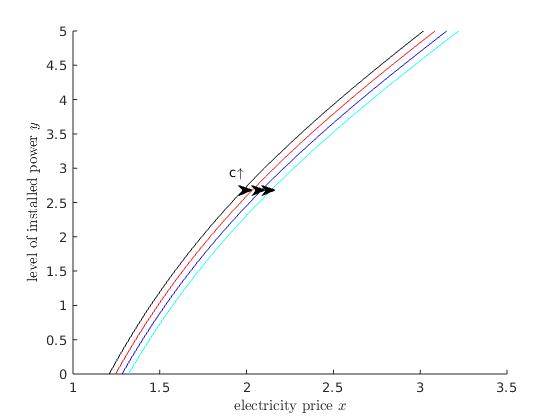}}
	\subfigure[\label{Fig2f}The function $F^{-1}$ with $\rho=0.035$ (black), $\rho=0.04$ (red), $\rho=0.045$ (blue), $\rho=0.05$ (cyan).]{\includegraphics[width=0.48\textwidth]{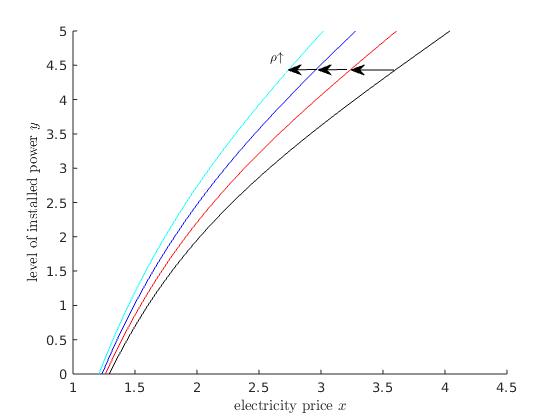}}
	\caption{Sensitivity of the function $x\mapsto F^{-1}(x)$ with respect to the model parameters. In each subfigure, the parameter values which are not varied are those provided in Table \ref{table}.\label{Fig2}}
\end{figure}
\begin{figure}
	\subfigure{\includegraphics[width=0.48\textwidth]{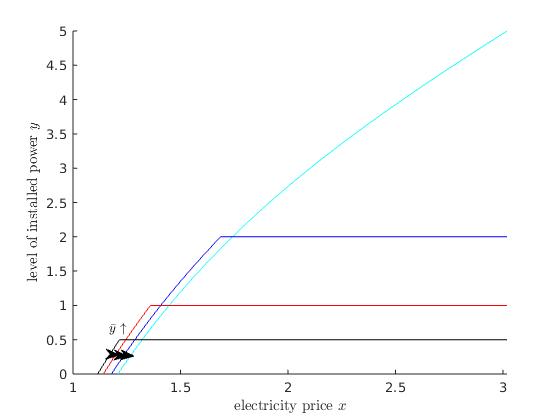}}
	\caption{Sensitivity of the function $x\mapsto F^{-1}(x)$ with respect to $\bar{y}$. In particular $\bar{y}=0.5$ (black), $\bar{y}=1$ (red), $\bar{y}=2$ (blue), $\bar{y}=5$ (cyan), and all the other parameter values are those provided in Table \ref{table}.\label{Fig3}}
\end{figure}
In this section, we study the sensitivity on the model parameters numerically. The preliminary parameters' values are given as in Table \ref{table}, and in the following we let each of those parameters vary within a particular set. The numerical results can be observed in Figure \ref{Fig2}.
\begin{table}[h]
	\centering
	\begin{tabular}   {|l|l|l|l|l|l|l|}
		\hline
		$\mu$ & $\sigma$ & $\kappa$ & $\rho$ & $c$ & $\beta$ & $\bar{y}$ \\ \hline
		0.20 & 0.50 & 0.1 & 0.05 & 0.30 & 0.15 & 5  \\
		\hline
	\end{tabular}
	\caption{Parameters' values for the numerical sensitivity analysis.}
	\label{table}
\end{table} 

We first study the behavior of the free boundary with respect to the volatility displayed in Figure \ref{Fig2a}. Here the volatility parameter $\sigma$ takes values in $\{0.5;0.6;0.7;0.8\}$, and we can observe that $F^{-1}$ is shifted to the right as $\sigma$ increases; that is the installation of additional panels is undertaken at higher prices. The firm might be afraid of receiving negative future prices due to higher uncertainty. This behavior is in line with the real options literature: when uncertainty increases, the agent is more reluctant to act, see for example \cite{mcdonald}.

Now, we let the mean-reversion level $\mu$ vary in $\{0.2;0.3;0.4;0.5\}$. Figure \ref{Fig2b} reveals that the critical threshold $F^{-1}$ moves to the left. A higher value for $\mu$ leads the firm to undertake the installation at lower prices. This observation can be explained by the fact that the company is eager to act earlier, the higher the expected future profits. 

In Figure \ref{Fig2e}, the impact parameter $\beta$ takes values in $\{0.15;0.175;0.2;0.225\}$, and as a consequence we find that $F^{-1}$ is shifted to the right as $\beta$ increases. We explain this observation by the fact that the impact of a higher electricity production on the future electricity prices is higher as $\beta$ increases. Therefore, the company starts to produce more electricity at higher prices, so to avoid lower (and possibly negative) electricity prices in the short run.

The dependency on $\kappa$ can be observed in Figure \ref{Fig2c}. Here, we let $\kappa$ taking values in $\{0.1;0.15;0.2;0.25\}$. We find that higher values for the mean reversion speed $\kappa$ leads the company to start installing solar panels at lower prices, but after some point, the company becomes more reluctant. This behavior can be explained by the fact that two effects play a role: on the one hand, a higher mean reversion speed reduces its ratio with respect to $\sigma$, the uncertainty is decreased, and hence a converse behavior with respect to Figure \ref{Fig2a} can be observed. On the other hand, a higher mean reversion speed also intensifies the impact of the company's actions on the price dynamics. Therefore, it behaves as in \ref{Fig2e}.

Figure \ref{Fig2d} shows the dependency on the proportional cost of installation $c$ which is valued in $\{0.3;0.8;1.3;1.8\}$. The shift is not parallel as one could suggest from the figure. The function $F^{-1}$ moves to the right, thus the company starts installing solar panels at higher prices. This observation is reasonable since the company waits for higher electricity prices to install additional solar panels whenever the proportional cost of installation increases.

Varying the discount factor $\rho$ in $\{0.035;0.04;0.045;0.05\}$, we find from Figure \ref{Fig2f} that $F^{-1}$ moves to the left, that is the company starts to install solar panels so to produce more electricity at lower prices. Clearly, a higher discount factor reduces the discounted future profits of the firm. Thus, the firm tends to produce more electricity earlier.

{Finally, we let $\bar{y}$ vary in $\{0.5;1;2;5\}$, and we observe that $F^{-1}$ moves to the right as $\bar{y}$ increases. Consequently, the possibility to increase the level of installed power up to a higher level makes the company more reluctant to act.}
\vspace{1cm}


\appendix
\section{Auxiliary Results}	
\label{App:A}

\renewcommand{\theequation}{A-\arabic{equation}}

\begin{lemma}\label{Properties}
	Let $\mathcal{L}$ denote the infinitesimal generator of the uncontrolled Ornstein-Uhlenbeck process \eqref{Xuncontrolled}, that is $\LL\equiv\LL^0$, where $\LL^y$, for $y\geq0$ be given and fixed, is the generator from \eqref{InfinitesOp}. Then the following hold true.
	\begin{itemize}
		\item [(1)] The strictly increasing{ positive} fundamental solution $\psi(\cdot)$, and the strictly decreasing{ positive} fundamental solution $\phi(\cdot)$ to the ordinary differential equation $(\LL-\rho) u=0$ are given by
		\begin{align}\label{eq34}
		\psi(x)&=e^{\frac{\kappa(x-\mu)^2}{2\sigma^2}}D_{-\frac{\rho}{\kappa}}\bigg(-\frac{x-\mu}{\sigma}\sqrt{2\kappa}\bigg),\\\label{eq38}
		\phi(x)&=e^{\frac{\kappa(x-\mu)^2}{2\sigma^2}}D_{-\frac{\rho}{\kappa}}\bigg(\frac{x-\mu}{\sigma}\sqrt{2\kappa}\bigg),
		\end{align}
		where 
		\begin{align}\label{eq35}
		D_\alpha(x):=\frac{e^{-\frac{x^2}{4}}}{\Gamma(-\alpha)}\int_{0}^{\infty}t^{-\alpha-1}e^{-\frac{t^2}{2}-xt}dt,\quad \alpha<0,
		\end{align} 
		is the cylinder function of order $\alpha$ and $\Gamma(\,\cdot\,)$ is the Euler's Gamma function.
		\item [(2)] Denoting by $\psi^{(k)}$ and $\phi^{(k)}$ the k-th derivative of $\psi$ and $\phi$, $k\in\mathbb{N}_0$, one has that  $\psi^{(k)}$ and $\phi^{(k)}$ are strictly convex and $\psi^{(k)}$ ($\phi^{(k)}$ respectively) identifies with the strictly increasing{ positive} (strictly decreasing{ positive} respectively) fundamental solution (up to a positive constant) to $(\LL-(\rho+k\kappa))u=0$. {In particular, it holds
			\begin{align}\label{Property1}
			\begin{split}
			\frac{\sigma^2}{2}\psi^{(k+2)}(x+{\beta}y)+\kappa\big((\mu-{\beta}y)-x\big)\psi^{(k+1)}(x+{\beta}y)-(\rho+k\kappa)\psi^{(k)}(x+{\beta}y)=0,
			\end{split}
			\end{align}
		for any $x\in\mathbb{R}$ and $y\geq0$}.
		\item [(3)] For any $k\in\mathbb{N}_0$, $\psi^{(k)}(x)\psi^{(k+2)}(x)-\psi^{(k+1)}(x)^2>0$ for all $x\in\mathbb{R}$.
		\item [(4)]\label{BasicAss}	For any $k\in\mathbb{N}_0$, the function $\Psi_k:\mathbb{R}\mapsto\mathbb{R}$ defined as $$\Psi_k(x)=\frac{\psi^{(k+1)}(x)^2}{\psi^{(k)}(x)\psi^{(k+2)}(x)},$$ is strictly increasing.
		\item [(5)]\label{} Denote by $\psi(\,\cdot\,;y)$ ($\phi(\,\cdot\,;y)$ respectively) the strictly increasing (strictly decreasing respectively) {positive} fundamental solution to $(\LL^y-\rho) u=0$ {for $y\geq 0$}. Then, {one can identify}
		$$ \psi(x;y) = \psi(x + \beta y), \qquad \phi(x;y) = \phi(x + \beta y).$$		
	\end{itemize}
\end{lemma}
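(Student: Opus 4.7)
My plan is to handle the five claims of Lemma~\ref{Properties} in order, relying on standard OU-generator theory, direct differentiation of the ODE $(\LL-\rho)u=0$, and a Laplace-type integral representation of $\psi$.

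For (1), I verify directly that the formulas \eqref{eq34}--\eqref{eq38} solve $(\LL-\rho)u=0$. Setting $z=\mp(x-\mu)\sqrt{2\kappa}/\sigma$ and factoring out $e^{\kappa(x-\mu)^{2}/(2\sigma^{2})}$ reduces the equation to Weber's parabolic cylinder equation of order $-\rho/\kappa$, whose solution is $D_{-\rho/\kappa}$. Strict positivity and the asserted monotonicity follow from \eqref{eq35}: the integrand is strictly positive, and differentiating under the integral sign (after the Gaussian prefactor cancels) yields sign-definite expressions. These are the classical formulas of Borodin--Salminen \cite{Borodin} already invoked in the paper. For (5), since $\LL^{y}$ differs from $\LL=\LL^{0}$ only by replacing $\mu$ with $\mu-\beta y$, a direct computation shows that $u(x):=\psi(x+\beta y)$ satisfies $(\LL^{y}-\rho)u=0$, and similarly for $\phi(x+\beta y)$; translation preserves positivity, monotonicity and boundary behavior at $\pm\infty$, yielding the identification.

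For (2), differentiating $\LL\psi-\rho\psi=0$ once gives $(\LL-(\rho+\kappa))\psi'=0$, and an induction on $k$ produces the identity \eqref{Property1}. To identify $\psi^{(k)}$ with a positive multiple of the strictly increasing positive fundamental solution at level $\rho+k\kappa$, I derive the Laplace-type representation
\begin{equation*}
\psi(x)=C\int_{0}^{\infty} s^{\rho/\kappa-1}\,e^{-\sigma^{2}s^{2}/(4\kappa)}\,e^{(x-\mu)s}\,ds
\end{equation*}
by inserting \eqref{eq35} into \eqref{eq34} and cancelling the Gaussian prefactor; writing this compactly as $\psi(x)=\int_{0}^{\infty}h(s)\,e^{(x-\mu)s}\,ds$ with $h>0$, differentiation under the integral yields $\psi^{(k)}(x)=\int_{0}^{\infty}s^{k}h(s)\,e^{(x-\mu)s}\,ds>0$, so $\psi^{(k)}$ is strictly positive and strictly increasing; strict convexity is the same positivity statement at level $k+2$. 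The analogous representation for $\phi$ has $-(x-\mu)s$ in the exponent.

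For (3), Cauchy--Schwarz applied to the factorization $s^{k+1}=s^{k/2}\cdot s^{(k+2)/2}$ inside the integral immediately yields $\psi^{(k+1)}(x)^{2}\leq\psi^{(k)}(x)\psi^{(k+2)}(x)$, strict since $s^{k}h(s)\,e^{(x-\mu)s}\,ds$ is not concentrated on a single point. For (4), taking the logarithmic derivative gives
\begin{equation*}
\frac{\Psi_{k}'(x)}{\Psi_{k}(x)}=2\frac{\psi^{(k+2)}(x)}{\psi^{(k+1)}(x)}-\frac{\psi^{(k+1)}(x)}{\psi^{(k)}(x)}-\frac{\psi^{(k+3)}(x)}{\psi^{(k+2)}(x)};
\end{equation*}
each ratio is the $s$-mean under the probability measure $\pi_{k,x}(ds)\propto s^{k}h(s)\,e^{(x-\mu)s}\,ds$, so positivity of the right-hand side reduces to a strict concavity-in-$k$ statement for $k\mapsto\mathbb{E}_{\pi_{k,x}}[s]$. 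This is the main obstacle: unlike in (3), a single Cauchy--Schwarz step does not close the argument, and the size-biasing picture alone does not force strict concavity. The cleanest route appears to be via the ODE relations of (2), which allow $\psi^{(k+2)}$ and $\psi^{(k+3)}$ to be expressed in terms of lower-order derivatives, thereby reducing the three-term inequality to an algebraic manipulation that becomes controllable through the bound in (3).
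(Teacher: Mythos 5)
Your treatment of parts (1), (2), (3) and (5) is correct and, in fact, more self-contained than the paper's own proof, which simply cites Lemma 4.3 of \cite{Koch2} for (1)--(3) and derives (5) from \eqref{Property1} with $k=0$ exactly as you do. The Laplace-type representation $\psi(x)=\int_0^\infty h(s)e^{(x-\mu)s}\,ds$ with $h(s)=Cs^{\rho/\kappa-1}e^{-\sigma^2 s^2/(4\kappa)}$ is the right tool: it gives positivity and monotonicity of all derivatives at once, and the Cauchy--Schwarz argument for (3) (strict because the measure has a density) is exactly the standard proof of $Q_k>0$.

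The genuine gap is part (4), and you have correctly diagnosed but not closed it. Writing $m_j(x):=\psi^{(j+1)}(x)/\psi^{(j)}(x)=\mathbb{E}_{\pi_{j,x}}[s]$ with $\pi_{j,x}(ds)\propto s^{j}h(s)e^{(x-\mu)s}ds$, the claim $\Psi_k'>0$ is equivalent to the strict concavity statement $2m_{k+1}(x)>m_k(x)+m_{k+2}(x)$. This is \emph{not} a consequence of (3) or of log-convexity of the moment sequence alone: for a general positive measure $\nu$ the sequence $j\mapsto\int s^{j+1}d\nu/\int s^{j}d\nu$ is increasing but need not be concave (e.g.\ $\nu=10^{6}\delta_{1}+\delta_{10}$ gives second differences that change sign), so any argument must exploit the specific Gaussian-type weight $h$. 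This is precisely why the paper outsources (4) to the dedicated companion paper \cite{Koch} (\emph{Step 1} in the proof of Theorem 3.1 therein), where the inequality is established through a careful analysis of ratios of parabolic cylinder functions. Your closing suggestion --- use the ODE recursions of (2) to eliminate $\psi^{(k+2)}$ and $\psi^{(k+3)}$ and then invoke (3) --- does not obviously terminate: after substitution one is left with a three-term inequality in $\psi^{(k)},\psi^{(k+1)},\psi^{(k+2)}$ whose sign is not controlled by $Q_k>0$ and $Q_{k+1}>0$ alone. As it stands, part (4) is asserted rather than proved, and since (4) is used essentially in the paper (e.g.\ in Lemma \ref{Implications} and in \emph{Step 1} of the proof of Proposition \ref{HJB}), this step needs either the citation to \cite{Koch} or a genuine argument specific to the weight $h$.
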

\begin{proof}
	{The proofs of (1)-(3) can be found in \cite{Koch2}, cf. Lemma 4.3 therein, and $(4)$ has been proven to be valid in \cite{Koch}, cf. \emph{Step 1} in the proof of Theorem 3.1 therein. Moreover, (5) follows from (2), and in particular from equation \eqref{Property1} with $k=0$.}
\end{proof}

\begin{lemma}\label{Implications}
	For any $(y,z)\in\mathbb{R}\times\mathbb{R}$, we have the following implication
	\begin{align*}
	D(y,z)\geq 0\,\Rightarrow\,N(y,z) > D(y,z).
	\end{align*}
\end{lemma}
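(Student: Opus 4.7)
The plan is to treat $N-D$ as an affine function of the single parameter $\gamma := (\rho+\kappa)(c - \tilde R(z,y))$, through which both $N$ and $D$ depend on $(y,z)$. Direct expansion from \eqref{NomOld} and \eqref{DenOld} gives
\[
N - D = \tfrac{2(\rho+\kappa)}{\rho}\,Q_0(z)\,\psi'(z) - \psi(z)\,Q_0'(z) + \gamma\bigl[Q_0(z)\psi''(z) - \psi(z)Q_1(z)\bigr].
\]
Since $\psi>0$ and $Q_1>0$ by Lemma~\ref{Properties}(3), the hypothesis $D\ge 0$ is equivalent to $\gamma \ge \gamma_* := -Q_0'/Q_1$.

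Next I would show that the coefficient of $\gamma$ is strictly positive. A short differentiation of $\Psi_0(z) = (\psi')^2/(\psi\psi'')$ yields the clean identity
\[
\Psi_0'(z)\,[\psi\psi'']^2 = \psi'(z)\bigl[Q_0(z)\psi''(z) - \psi(z)Q_1(z)\bigr],
\]
so Lemma~\ref{Properties}(4) with $k=0$ together with $\psi'>0$ forces $Q_0\psi''-\psi Q_1>0$. Consequently $N-D$ is strictly increasing in $\gamma$ on $\{\gamma \ge \gamma_*\}$, and it suffices to verify $(N-D)|_{\gamma=\gamma_*}>0$.

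At $\gamma = \gamma_*$ we have $D = 0$, so the task reduces to showing $N|_{\gamma_*}>0$. Substituting and simplifying,
\[
N\big|_{\gamma_*} \;=\; \tfrac{Q_0(z)}{Q_1(z)}\Bigl[\tfrac{2(\rho+\kappa)}{\rho}\psi'(z)Q_1(z) - Q_0'(z)\psi''(z)\Bigr],
\]
so since $Q_0,Q_1>0$, the whole lemma boils down to the pointwise inequality
\[
(\star)\qquad \tfrac{2(\rho+\kappa)}{\rho}\,\psi'(z)\,Q_1(z) \;>\; Q_0'(z)\,\psi''(z), \qquad z\in\mathbb{R}.
\]
The case $Q_0'(z)\le 0$ is immediate because the left-hand side is strictly positive and the right-hand side is non-positive.

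For the non-trivial case $Q_0'(z)>0$, the plan is to use the purely algebraic identity $\psi' Q_0' = \psi Q_1 + \psi'' Q_0$ (an elementary expansion from the definitions of $Q_0,Q_1,Q_0'$) to eliminate $Q_0'$, and the ODE identity $Q_1 = \tfrac{2}{\sigma^2}[(\rho+\kappa)(\psi')^2 - \rho\psi\psi'']$ coming from Lemma~\ref{Properties}(2) to eliminate $\psi\psi''$. After some cancellation, $(\star)$ is equivalent to $(\rho+2\kappa)\psi'(z)Q_1(z) > \rho\,\psi'''(z)Q_0(z)$. I expect the last step to be the main obstacle: it is verified by substituting the ODE expressions $\psi'' = \tfrac{2}{\sigma^2}[\rho\psi - \kappa(\mu-z)\psi']$ and $\psi''' = \tfrac{2}{\sigma^2}[(\rho+\kappa)\psi' - \kappa(\mu-z)\psi'']$, grouping terms by monomials in $\psi,\psi',\psi''$, and recognising that the resulting polynomial is positive using the sign constraints $Q_0,Q_1>0$; in particular, care must be taken because the coefficient $b=\kappa(\mu-z)$ can take either sign, so the reduction cannot rely on sign of $b$ alone but must exploit the combined ODE relations.
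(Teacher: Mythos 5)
Your reduction is correct and in fact coincides with the paper's own argument almost line by line: the affine decomposition of $N-D$ in $\gamma=(\rho+\kappa)(c-\tilde R(z,y))$, the positivity of the $\gamma$-coefficient $\Phi(z)=\psi''(z)Q_0(z)-\psi(z)Q_1(z)$ via the monotonicity of $\Psi_0$ (Lemma \ref{Properties}-(4) with $k=0$), the evaluation at $\gamma_*=-Q_0'/Q_1$ where $D=0$, and the resulting target $(\star)$, which (most directly via the algebraic identity $\psi''Q_0'=\psi'Q_1+\psi'''Q_0$) is indeed equivalent to $(\rho+2\kappa)\psi'(z)Q_1(z)>\rho\,\psi'''(z)Q_0(z)$. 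All of this is sound and matches the paper's estimate $N-D\geq (\rho Q_1)^{-1}Q_0\big[2(\rho+\kappa)\psi'Q_1-\rho\psi''Q_0'\big]$.

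The gap is exactly where you locate it: the final inequality, which is the actual content of the lemma and is not established by the method you sketch. Substituting $\psi'''=\tfrac{2}{\sigma^2}\big[(\rho+\kappa)\psi'-\kappa(\mu-z)\psi''\big]$ and $Q_1=\tfrac{2}{\sigma^2}\big[(\rho+\kappa)(\psi')^2-\rho\psi\psi''\big]$ into $(\rho+2\kappa)\psi'Q_1-\rho\psi'''Q_0$ leaves, after the cancellations, a residual term $\rho\,\kappa(\mu-z)\,\psi''(z)Q_0(z)$ of indeterminate sign, and eliminating $\kappa(\mu-z)$ through the ODE merely reintroduces $\psi'''Q_0$; the inequality is not an algebraic consequence of $Q_0,Q_1>0$ plus the ODE. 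The paper closes this step differently: using \eqref{Property1} with $k=0,1,2$ one verifies the identity $(\rho+2\kappa)\psi'Q_1-\rho\psi'''Q_0=\tfrac{\sigma^2}{2}\big[\psi'''(z)Q_1(z)-\psi'(z)Q_2(z)\big]$, and the right-hand side is strictly positive because it is, up to the positive factor $\psi''$, the numerator of the derivative of $\Psi_1=(\psi'')^2/(\psi'\psi''')$, i.e.\ Lemma \ref{Properties}-(4) with $k=1$ --- a nontrivial monotonicity property of ratios of parabolic cylinder functions imported from \cite{Koch}, not a sign-bookkeeping exercise. To complete your proof you must invoke that lemma (with $k=1$) at the last step; as written, the "main obstacle" you flag is a genuine one.
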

\begin{proof}
	Recall \eqref{DenOld}, and let $(y,z)\in\mathbb{R}^2$ be such that $D(y,z)\geq 0$. The previous inequality implies
	\begin{align}\label{DCon}
	(\rho+\kappa)\left(c-\tilde{R}(z,y)\right) \geq - \frac{{Q_0'(z)}}{Q_1(z)},
	\end{align}
	as $Q_1$ is strictly positive.
	
	In order to proceed with the proof, we introduce the function $\Phi:\mathbb{R}\times\mathbb{R}\mapsto\mathbb{R}$ such that
	\begin{align*}
	\Phi(z):=\psi''(z)Q_0(z)-\psi(z)Q_1(z).
	\end{align*}
	Employing Lemma \ref{BasicAss}-(4) with $k=0$, we find that $\Phi$ is strictly positive. Now, we use both \eqref{DCon} and the positivity of $\Phi$ to get 
	\begin{align}\label{NCond}
	\begin{split}
	N(y,z)-D(y,z)&=(\rho+\kappa)\left(c-\tilde{R}(z,y)\right)\Phi(z)+\frac{2(\rho+\kappa)}{\rho}\psi'(z)Q_0(z)-\psi(z) {  Q_0'(z)} \\
	&\geq { - \frac{Q_0'(z)}{Q_1(z)} \Phi(z)+\frac{2(\rho+\kappa)}{\rho}\psi'(z)Q_0(z)-\psi(z) Q_0'(z)} \\
	&= \left(\rho Q_1(z)\right)^{-1} Q_0(z) \Big[{ - \rho \psi''(z) Q_0'(z) + 2 (\rho + \kappa)  \psi'(z) Q_1(z)} \Big], 
	\end{split}
	\end{align}
	where we have rearranged terms after the inequality. To finish the proof, we exploit \eqref{Property1} with $k=0,1,2$ in \eqref{NCond}, so to obtain
	\begin{align}
	\begin{split}
	N(y,z)-D(y,z)\geq&\frac{\sigma^2}{2} \left(\rho Q_1(z)\right)^{-1}Q_0(z)\Big[\psi'''(z)Q_1(z)-\psi'(z)Q_2(z)\Big]>0,
	\end{split}
	\end{align}
	where the last inequality holds true upon recalling $Q_k>0$ and Lemma \ref{BasicAss}-(4) with $k=1$.
\end{proof}

\section{Proofs of Results from Section \ref{sec:PreResVerTheorem} and Section \ref{sec:Sol}}	
\label{App:B}

\renewcommand{\theequation}{B-\arabic{equation}}

\emph{Proof of Proposition \ref{GrowthV}.}
\vspace{0.25cm}\\
The proof employs arguments from the proof of Proposition 3.1 in \cite{Koch2} that are adjusted to our setting. In a first step we prove that \eqref{eq13} holds true, and then in a second step we show the monotonicity property of $V$.\vspace{0.25cm}

\emph{Step 1.} In order to prove the lower bound of $V$, we take the admissible (non-)installation strategy $I^0$, so to obtain for all $y\in[0,\bar{y}]$ 
\begin{align}\label{eq7}
V(x,y)\geq R(x,y)>
-K_1\big(1+|x|\big),
\end{align}
for some $K_1>0$.

To determine the upper bound of $V$, recall the uncontrolled price process $X^{x}$ from \eqref{Xuncontrolled}, and notice that by an application of It\^o's formula we find for any $\tilde{\rho}>0$
$$|e^{-\tilde{\rho} t}X^{x}_t|\leq |x|+\tilde{\rho}\int_0^te^{-\tilde{\rho} u}|X^{x}_u|du+\int_0^te^{-\tilde{\rho} u}\kappa(|\mu|+|X^{x}_u|)du+\bigg|\int_0^te^{-\tilde{\rho} u}\sigma dW_u\bigg|,$$
which in turn implies 
\begin{align}\label{eq40}
\mathbb{E}\bigg[\sup_{t\geq 0}e^{-\tilde{\rho} t}|X_t^{x}|\bigg]
\leq|x|+C_1\bigg(1+\int_{0}^{\infty}e^{-\tilde{\rho} u}\mathbb{E}\big[|X^{x}_u|\big]du\bigg)+\sigma\mathbb{E}\bigg[\sup_{t\geq 0}\bigg|\int_{0}^{t}e^{-\tilde{\rho} u}dW_u\bigg|\bigg],
\end{align}
for some $C_1>0$. An application of the Burkholder-Davis-Gundy inequality (cf. Theorem 3.28 in Chapter 3 of \cite{karatzas}) yields 
\begin{align}\label{eq41}
\mathbb{E}\bigg[\sup_{t\geq 0}e^{-\tilde{\rho} t}|X_t^{x}|\bigg]\leq |x|+C_1\bigg(1+\int_{0}^{\infty}e^{-\tilde{\rho} u}\mathbb{E}\big[|X^{x}_u|\big]du\bigg)+C_2\mathbb{E}\bigg[\bigg(\int_{0}^{\infty}e^{-2\tilde{\rho} u}du\bigg)^{\frac{1}{2}}\bigg].
\end{align}
for a constant $C_2>0$, and therefore
\begin{align}\label{supfinite}
\mathbb{E}\bigg[\sup_{t\geq 0}e^{-\tilde{\rho} t}|X_t^{x}|\bigg]\leq C\big(1+|x|\big),
\end{align}
for some constant $C>0$, since it follows from standard calculations that $\mathbb{E}\big[|X_u^{x}|\big]\leq C_3(1+|x|)$ for a constant $C_3>0$.

Now, for any $I\in\mathcal{I}^{\bar{y}}(y)$ we find by \eqref{supfinite}
\begin{align}\label{eq8}
\begin{split}
\mathcal{J}(x,y,I)&\leq\mathbb{E}\bigg[\int_{0}^{\infty}e^{-\rho t}X_t^{x,y,I}Y^{y,I}_t dt\bigg]\leq\mathbb{E}\bigg[\int_{0}^{\infty}e^{-\rho t}X_t^{x}Y^{y,I}_t dt\bigg]\\&\leq\mathbb{E}\bigg[\int_{0}^{\infty}e^{-\rho t}\big|X_t^{x}\big|Y^{y,I}_t dt\bigg]
\leq\bar{y}\mathbb{E}\bigg[\int_{0}^{\infty}e^{-\frac{\rho}{2} t}|e^{-\frac{\rho}{2} t}X_t^{x}|dt\bigg]\leq K_2\big(1+|x|\big),
\end{split}
\end{align}
for some $K_2>0$, and upon observing that $X^{x,y,I}\leq X^x$ $\mathbb{P}$-a.s. for any $I\in\mathcal{I}^{\bar{y}}(y)$. Finally, from \eqref{eq7} and \eqref{eq8}, we have that \eqref{eq13} holds with $K=\max(K_1,K_2)$.\vspace{0.25cm}

\emph{Step 2.} If $y=\bar{y}$, then the only admissible strategy is $I^0$, thus $V(x,\bar{y})=R(x,\bar{y})$. In order to prove that $x\mapsto V(x,y)$ is increasing, let $x_2>x_1$, and notice that one has $X^{x_2,y,I}_t\geq X^{x_1,y,I}_t$ $\mathbb{P}$-a.s.\ for any $t\geq 0$ and $I\in\mathcal{I}^{\bar{y}}(y)$. Thus $\mathcal{J}(x_2,y,I)\geq\mathcal{J}(x_1,y,I)$ which implies $V(x_2,y)\geq V(x_1,y)$.
\vspace{0.35cm}

\emph{Proof of Lemma \ref{PropA}.}
\vspace{0.25cm}\\
In the following, \emph{Step 1} proves the positivity and the monotonicity property of the function $A$, while \emph{Step 2} provides both the representation of $A$ and the lower bound of $F$.\vspace{0.25cm}

\emph{Step 1.} Recalling that $R_{yx}(x,y)=\left(\rho+\kappa\right)^{-1}$ for all $(x,y)\in\mathbb{R}\times[0,\bar{y}]$, we find from \eqref{cond5}
\begin{align}\label{A'y}
A'(y)=-\beta\frac{\psi''(\tilde{F}(y))}{\psi'(\tilde{F}(y))}A(y)-\frac{1}{(\rho+\kappa)\psi'(\tilde{F}(y))}=\mathcal{H}(\tilde{F}(y),A(y)),
\end{align}
where $\mathcal{H}:\mathbb{R}\times\mathbb{R}\mapsto\mathbb{R}$ is such that $$\mathcal{H}(\bar{F},A)=-\beta\frac{\psi''(\bar{F})}{\psi'(\bar{F})}A-\frac{1}{(\rho+\kappa)\psi'(\bar{F})}=-\frac{1}{(\rho+\kappa)\psi'(\bar{F})}\left(\beta(\rho+\kappa)\psi''(\bar{F})A+1\right).$$
In light of the boundary condition $w(x,\bar{y})=R(x,\bar{y})$, cf. Theorem \ref{VerificationTheorem}, we must have that
\begin{align}\label{cond8}
A(\bar{y})=0.
\end{align}
Due to \eqref{cond8} and the fact that $\mathcal{H}|_{\mathbb{R}\times[0,\infty)}$ is strictly negative as $\psi^{(k)}$ is strictly positive for any $k\in\mathbb{N}_0$, cf. Lemma \ref{Properties}-(2), we conclude that $A$ is both strictly positive and strictly decreasing.\vspace{0.25cm}

\emph{Step 2.}
Equations \eqref{cond4} and \eqref{cond5} lead to
\begin{align}\label{Ay}
A(y)=\beta^{-1}\times\frac{\psi'(F(y)+{\beta}y)\Big(c-R_y(F(y),y)\Big)+\left(\rho+\kappa\right)^{-1}\psi(F(y)+{\beta}y)}{\psi'(F(y)+{\beta}y)^2-\psi''(F(y)+{\beta}y)\psi(F(y)+{\beta}y)}.
\end{align}
Lemma \ref{Properties}-(3) ensures that the denominator of $A$ is nonzero. Now, the numerator on the right-hand side of \eqref{Ay} writes as
\begin{align*}
&\left(\beta\rho(\rho+\kappa)\right)^{-1}\left[(\rho+\kappa)\psi'(F(y)+{\beta}y)\left(c-R_y(F(y),y)\right)+\rho\psi(F(y)+{\beta}y)\right]\\
=&\left(\beta\rho(\rho+\kappa)\right)^{-1}\left[(\rho+\kappa)\Big(c\rho+\frac{\kappa\beta}{\rho+\kappa}y-F(y)\Big)\psi'(F(y)+{\beta}y)+\frac{\sigma^2}{2}\psi''(F(y)+{\beta}y)\right],
\end{align*}
upon using \eqref{Property1} with $k=0$. Hence, 
\begin{align}\label{Ayneueq}
A(y)=\left(\beta\rho(\rho+\kappa)\right)^{-1}\times\frac{(\rho+\kappa)\Big(c\rho+\frac{\kappa\beta}{\rho+\kappa}y-F(y)\Big)\psi'(F(y)+{\beta}y)+\frac{\sigma^2}{2}\psi''(F(y)+{\beta}y)}{\psi'(F(y)+{\beta}y)^2-\psi''(F(y)+{\beta}y)\psi(F(y)+{\beta}y)}.
\end{align} Since the denominator on the right-hand side of \eqref{Ayneueq} is strictly negative by Lemma \ref{Properties}-(3), we must have from the results of \emph{Step 1} that the numerator on the right-hand side of \eqref{Ayneueq} is also strictly negative: this is possible only if $$c\rho+\frac{\kappa\beta}{\rho+\kappa}y-F(y)<0,$$ as $\psi^{(k)}$ is strictly positive for any $k\in\mathbb{N}$. Hence, $F$ satisfies 
\begin{align}\label{condrev}
F(y)> c\rho+\frac{\kappa\beta}{\rho+\kappa}y\geq c\rho,\quad\text{for all }y\in[0,\bar{y}].
\end{align}
\vspace{0.35cm}

\noindent \emph{Proof of Proposition \ref{Prop:SolODE}.}\\
The proof is organised in two steps: in a first step, we provide a representation of the function $D$ that is used after. Then, in \emph{Step 2}, we show the existence of a strictly increasing maximal solution ${ \tilde F }$ of the ODE \eqref{ODE}, and prove (by a contradiction) that ${ \tilde F }$ in fact exists on the interval $[0,\bar{y}]$.\vspace{0.25cm} 

\emph{Step 1.} Recall \eqref{DenOld}, and let $\tilde{D}:\mathbb{R}\times\mathbb{R}\mapsto\mathbb{R}$ be a function which is given by 
\begin{align}\label{Dneu}
\tilde{D}(y,z)=\left[(\rho+\kappa)\psi(z)Q_0(z)\right]^{-1}D(y,z).
\end{align}
Then, where $\tilde{F}$ exists, we find upon employing \eqref{Aytilde} and \eqref{Aprimey}
\begin{align}\label{eq25}
\tilde{D}(y,\tilde{F}(y))=-{\beta}\psi'''(\tilde{F}(y))A(y)-\psi''(\tilde{F}(y))A'(y).
\end{align}
Now, from \eqref{Property1}, we derive
\begin{align}\label{eq21}
\psi^{(k+2)}(\tilde{F}(y))=-\frac{2\kappa}{\sigma^2}\left(\mu-\tilde{F}(y)\right)\psi^{(k+1)}(\tilde{F}(y))+\frac{2(\rho+k\kappa)}{\sigma^2}\psi^{(k)}(\tilde{F}(y)),\quad k\in\mathbb{N}_0.
\end{align}
Using \eqref{eq25} and the latter equation \eqref{eq21} with $k=0,1$, we obtain 
\begin{align}\label{eq23}
\begin{split}
\tilde{D}(y,\tilde{F}(y))
=&\frac{2}{\sigma^2}\Big[\kappa\left(\mu-\tilde{F}(y)\right)\big(\beta\psi''(\tilde{F}(y))A(y)+\psi'(\tilde{F}(y))A'(y)\big)\\
&-\rho\big(\beta\psi'(\tilde{F}(y))A(y)+\psi(\tilde{F}(y))A'(y)\big)-\kappa\beta\psi'(\tilde{F}(y))A(y)\Big]\\
=&\frac{2}{\sigma^2}\Big[\tilde{F}(y)-c\rho-\frac{(\rho+2\kappa)\beta}{\rho+\kappa}y-\kappa\beta\psi'(\tilde{F}(y))A(y)\Big],
\end{split}
\end{align}
where we have employed \eqref{cond4} and \eqref{cond5} for the last equality.\vspace{0.25cm}

\emph{Step 2.} Recall \eqref{ODE} and \eqref{ODEalt}. In the following, we denote by $\mathcal{D}_\mathcal{G}$ the domain of $\mathcal{G}$, that is $\mathcal{D}_\mathcal{G}=(\mathbb{R}\times\mathbb{R})\setminus\{(y,z)\in\mathbb{R}^2:D(y,z)=0\}$. Since $\psi^{(k)}$ is continuously differentiable for any $k\in\mathbb{N}$, the functions $N$ and $D$ are continuously differentiable respectively. Therefore, $\mathcal{G}(y,\cdot)$ is locally Lipschitz-continuous on its domain $\mathcal{D}_\mathcal{G}$ which is an open set. Hence, we find that the ODE \eqref{ODE} with boundary condition $\tilde{F}(\bar{y})=\tilde{x}$ admits a unique maximal solution $\tilde{F}$ on an interval $I_\text{max}=(y_-,y_+)$ with $\bar{y}\in I_\text{max}$. Since we want to show the existence and uniqueness of a solution on $[0,\bar{y}]$, it is enough to prove that $y_- < 0$. Following, for example, Theorem 2.10 in \cite{Barbu}, $y_-<\bar{y}$ is such that
\begin{itemize}
	\item [(i)] either $\lim\limits_{y\downarrow y_-}\left(||(y,\tilde{F}(y))||\right)^{-1}=0$,
	\item[(ii)] or $\lim\limits_{y\downarrow y_-}\inf\limits_{w\in\partial\mathcal{D}_\mathcal{G}}||(y,\tilde{F}(y))-w||=0$,
\end{itemize}
where $\partial\mathcal{D}_\mathcal{G}=\{(y,z)\in\mathbb{R}^2:D(y,z)=0\}$ is the boundary of the domain of $\mathcal{G}$, and $||\cdot||$ is a norm in $\mathbb{R}^2$.

Now, suppose $y_-\geq 0$. Notice that $N(y,\tilde{F}(y))>D(y,\tilde{F}(y))>0$ for all $y\in I_\text{max}$ by Lemma \ref{lemma:ybar} and Lemma \ref{Implications}, and therefore, we have $\tilde{F}'>\beta>0$ on $I_\text{max}$. Adjusting slightly the proof of Lemma \ref{PropA}, we find that $\tilde{F}$ is bounded from below on $(y_-,\bar{y}]$, and together with its monotonicity property, we must have that $\lim\limits_{y\downarrow y_-}\left(||(y,\tilde{F}(y))||\right)^{-1}>K$, for some $K>0$. Thus, in order to have a contradiction, it is left to prove that condition (ii) above is not satisfied, so to show $\lim\limits_{y\downarrow y_-}D(y,\tilde{F}(y))\neq 0$. Again, due to the boundedness of $\tilde{F}$ and the fact that both $Q_0$ and $\psi$ are strictly positive, we find $$\psi(\tilde{F}(y))Q_0(\tilde{F}(y))>K_1,\quad\text{for all }y\in(y_-,\bar{y}],$$ for some $K_1>0$. Therefore, upon recalling \eqref{Dneu}, we can complete the proof by showing that $\lim\limits_{y\downarrow y_-}\tilde{D}(y,\tilde{F}(y))\neq 0$. Lemma \ref{lemma:ybar} implies 
\begin{align}\label{tildeDybar}
\tilde{D}(\bar{y},\tilde{F}(\bar{y}))>0.
\end{align} Computing the total derivative of $\tilde{D}(y,\tilde{F}(y))$ with respect to $y\in I_\text{max}$, upon using \eqref{eq23}, gives
\begin{align}\label{H'y}
\begin{split}
\frac{d}{dy}\tilde{D}(y,\tilde{F}(y))&=\frac{2}{\sigma^2}\left[\tilde{F}'(y)\left(1-\kappa\beta\psi''(\tilde{F}(y))A(y)\right)-\frac{(\rho+2\kappa)\beta}{\rho+\kappa}-\kappa\beta\psi'(\tilde{F}(y))A'(y)\right]\\
&=\frac{2}{\sigma^2}\left(\tilde{F}'(y)-\beta\right)\left(1-\kappa\beta\psi''(\tilde{F}(y))A(y)\right),
\end{split}
\end{align}
where the last equality holds by an application of \eqref{cond5}. Next, we write the last coefficient in \eqref{H'y}, that is $1-\kappa\beta\psi''(\tilde{F}(y))A(y)$, as a function of ${G}:\mathbb{R}\times\mathbb{R}\mapsto\mathbb{R}$ defined as
\begin{align*}
&{G}(y,z)\\
&=\left((\rho+\kappa)Q_0(z)\right)^{-1}\left[(\rho+2\kappa)\psi(z)\psi''(z)-(\rho+\kappa)\psi'(z)^2+\kappa(\rho+\kappa)\left(c-\tilde{R}(z,y)\right)\psi'(z)\psi''(z)\right].
\end{align*}
Employing \eqref{Aytilde}, we get $1-\kappa\beta\psi''(\tilde{F}(y))A(y)={G}(y,\tilde{F}(y))$, and thus we have 
\begin{align}\label{TotDerDneu}
\frac{d}{dy}\tilde{D}(y,\tilde{F}(y))=\frac{2}{\sigma^2}\left(\tilde{F}'(y)-\beta\right)G(y,\tilde{F}(y)).
\end{align} Now, let $(y^\star,z^\star)\in\mathbb{R}\times\mathbb{R}$ be such that $\tilde{D}(y^\star,z^\star)=0$. We find from \eqref{Dneu} that $D(y^\star,z^\star)=0$. Hence, upon recalling \eqref{DenOld}, it holds
\begin{align}\label{eq24}
(\rho+\kappa)\left(c-\tilde{R}(z^\star,y^\star)\right)=-\frac{{ Q_0'}(z^\star)}{Q_1(z^\star)}.
\end{align}
Then, exploiting \eqref{eq24}, we obtain
\begin{align}\label{GProp}
\begin{split}
{G}({y}^\star,z^\star)
=&\bigg((\rho+\kappa)Q_0(z^\star)Q_1(z^\star)\bigg)^{-1}\times\bigg[(\rho+\kappa)\psi(z^\star)\psi'(z^\star)\psi''(z^\star)\psi'''(z^\star)\\
&-(\rho+2\kappa)\psi(z^\star)\psi''(z^\star)^3+(\rho+2\kappa)\psi'(z^\star)^2\psi''(z^\star)^2-(\rho+\kappa)\psi'(z^\star)^3\psi'''(z^\star)\bigg]\\
=&-\frac{\sigma^2}{2}\bigg((\rho+\kappa)Q_1(z^\star)\bigg)^{-1}Q_2(z^\star)
<0.
\end{split}
\end{align}
In \eqref{GProp} we have used: equation \eqref{Property1} with $k=0,1,2$ for the last equality, and the fact that $Q_1$ and $Q_2$ are strictly positive for the strict inequality.\\
Recalling that $\tilde{F}'-\beta>0$ on $I_\text{max}$, we conclude from \eqref{tildeDybar}, \eqref{TotDerDneu} and \eqref{GProp} that $\tilde{D}(y,\tilde{F}(y))$ cannot tend to zero as $y\downarrow y_-$. 
This completes the proof.




\medskip

\indent \textbf{Acknowledgments.} Financial support by the German Research Foundation (DFG) through the Collaborative Research Centre 1283 ``Taming uncertainty and profiting from randomness and low regularity in analysis, stochastics and their applications'' is gratefully acknowledged by the first author. We wish to thank Almendra Awerkin, Dirk Becherer, {Tiziano De Angelis,} Giorgio Ferrari, {Markus Fischer,} Wolfgang Runggaldier, {Thorsten Schmidt} and Mihail Zervos for useful comments.

\smallskip



\end{document}